\newtheorem{theorem}{Theorem}
\newtheorem{proposition}{Proposition}
\newtheorem{lemma}{Lemma}
\theoremstyle{definition}
\newtheorem{definition}{Definition}
\newtheorem{remark}{Remark}
\newcommand{\supp}{\mathop{\rm supp}}
\newcommand{\field}[1]{\mathbb{#1}}
\newcommand{\R}{\field{R}}
\newcommand{\Z}{\field{Z}}
\newcommand{\N}{\field{N}}
\newcommand{\C}{\field{C}}
\newcommand{\diag}{\mathop{\rm diag}}
\newcommand{\Ai}{\mathop{\rm Ai}}
\def\XXint#1#2#3{{\setbox0=\hbox{$#1{#2#3}{\int}$}
\vcenter{\hbox{$#2#3$}}\kern-.5\wd0}}
\title{Strong asymptotics for the Pollaczek multiple orthogonal polynomials ensembles
}
\author{A. I. Aptekarev, G. L\'opez Lagomasino, and A. Mart\'{\i}nez--Finkelshtein}
\date{\today}
\begin{document}

\vspace{1cm} \maketitle

\begin{abstract}

We study the asymptotic properties of a class of multiple orthogonal
polynomials with respect to a Nikishin system generated by two
measures $(\sigma_1, \sigma_2)$ with unbounded supports
($\mbox{supp}(\sigma_1) \subset \mathbb{R}_+$,
$\mbox{supp}(\sigma_2) \subset \mathbb{R}_-$), and such that the second
measure $\sigma_2$ is discrete.
The weak asymptotics for these polynomials was obtained by Sorokin in \cite{Sorokin:2009fk}.
We use his result and the Riemann--Hilbert analysis to derive
the strong asymptotics of these polynomials and of the reproducing kernel.


\end{abstract}


\section{Introduction}

If we are given  $p$ weight functions $w_1, \dots, w_p : \R \to \R$ with finite moments and a multi-index $\vec{n} = (n_1,\dots, n_p) \in \mathbb{Z}_+^o \setminus \{\vec 0\}$, the polynomials satisfying the orthogonality relations
\begin{equation} \label{eq3.16}
    \int_{-\infty}^{\infty} P_{\vec{n}}(x) x^k w_j(x) dx = 0  \quad \mbox{for } k=0, \ldots, n_j-1,
    \quad j=1, \ldots, p,
    \end{equation}
are known as \textit{multiple orthogonal polynomials} (or MOP) of type II. These
polynomials  appear in a natural way in certain models of random
matrices and non-intersecting paths, fact that was observed first in
\cite{9} for the Hermitian random matrix model with external
source. The general notion of a  \textit{multiple orthogonal
polynomial ensemble} (generalizing in a certain sense the well-known concept of a biorthogonal  ensemble of A.~Borodin \cite{Borodin98}) was
introduced recently in \cite{Kuij} (see also \cite{aptkuij,MR2827849}):
\begin{definition}
A \emph{multiple orthogonal polynomial ensemble} is a probability density
function on $\mathbb R^n$, with $n= |\vec n|=n_1 + \cdots + n_p$, of the form
\begin{equation} \label{eq3.8}
 \mathcal P(x_1, \ldots, x_n) =     \frac{1}{Z_n}  \det\left[ x_j^{i-1} \right]_{i,j=1, \ldots, n}
    \, \det\left[ \varphi_i(x_j) \right]_{i,j=1, \ldots, n}
    \end{equation}
for certain functions $\varphi_1, \ldots, \varphi_n : \mathbb R \to
\mathbb R $ whose linear span is equal to
\begin{equation} \label{eq3.9}
    \mathrm{span} \{ x^k w_j(x) \mid k = 0, \dots, n_j-1, \, j= 1, \dots, p \}.
    \end{equation}
We say that the MOP ensemble \eqref{eq3.8} is generated by the
weight functions $w_1, \ldots, w_p$ and the multi-index $\vec{n} =
(n_1, \ldots, n_p)$.
\end{definition}

Obviously, the necessary condition for the consistency of this definition is that the product in the right hand side of \eqref{eq3.8} has fixed sign for all $(x_1,\dots, x_n) \in \R^n$, and that the normalizing constant (``partition function'', chosen such that the integral of $ \mathcal P$ over $\R^n$ equals $1$) satisfies
\begin{equation} \label{eq3.10}
    Z_n = \int_{\mathbb R^n}  \det\left[ x_j^{i-1} \right]_{i,j=1, \ldots, n}
    \, \det\left[ \varphi_i(x_j) \right]_{i,j=1, \ldots, n} \, dx_1 \cdots dx_n
        \in \mathbb R \setminus \{0\}.
    \end{equation}
This expression can be transformed (see, for example, in
\cite{aptkuij}) into  a block Hankel determinant
\[ Z_n = c_n \det\begin{bmatrix} H_1 &  \cdots & H_p \end{bmatrix} \]
with $p$ rectangular blocks, where
\[ H_j = \begin{bmatrix} \displaystyle \int_{-\infty}^{\infty} x^{i + k -2} w_j(x) dx
        \end{bmatrix}_{i=1, \ldots, n, \ k = 1, \ldots, n_j}\]
is of size $n \times n_j$ and contains the moments of the weight
$w_j$.

Being a multiple orthogonal polynomial ensemble  a determinantal point
process, there is a kernel $K_n$ such that \eqref{eq3.8}
can be written as
\begin{equation} \label{eq3.12}
    \mathcal P(x_1, \ldots, x_n) = \frac{1}{n!} \det\left[ K_n(x_i,x_j) \right]_{i,j=1, \ldots, n}.
    \end{equation}
 In fact,
\begin{equation} \label{eq3.14}
    K_n(x,y) = \sum_{i=1}^{n} \sum_{j=1}^n \left[A_n^{-1} \right]_{j,i} x^{i-1} \varphi_j(y),
    \end{equation}
where $\left[A_n^{-1} \right]_{j,i}$ denotes the ($ji$)th entry of
the inverse of the matrix
\[ A_n = \left[ \int x^{i-1} \varphi_j(x) dx \right]_{i,j=1, \ldots, n}. \]
With this notation, $Z_n = n! \det A_n $. Since $Z_n \neq 0$, we see that the matrix $A_n$ is
invertible, and the kernel  \eqref{eq3.14} is well-defined.

It is well known that in this context, for every $k = 1, \ldots, n$,
\begin{align} \nonumber
    R(x_1, \ldots, x_k) & := \frac{n!}{(n-k)!} \int_{\mathbb R^{n-k}} \mathcal  P(x_1, \ldots, x_n) \, dx_{k+1} \cdots dx_n \\
    \label{eq3.13}
    & = \det \left[K_n(x_i,x_j) \right]_{i,j=1, \ldots, k}.
\end{align}
Also the (monic) multiple orthogonal polynomial  of type II, $P_{\vec{n}}$, defined by \eqref{eq3.16},  exists, is uniquely determined, and has the probabilistic interpretation of being
the ``average characteristic polynomial'' of the ensemble \eqref{eq3.8}:
\begin{equation} \label{eq3.17}
    P_{\vec{n}}(z) = \mathbb E \left[ \prod_{j=1}^n (z-x_j) \right].
    \end{equation}
This conclusion is based on the integral representation
\begin{equation} \label{eq3.18}
    P_{\vec{n}}(z) = \frac{1}{Z_n} \int_{\mathbb R_n}
        \prod_{j=1}^n (z-x_j) \, \prod_{i < j} (x_j-x_i) \, \det \left[ \varphi_i(x_j) \right]_{i,j=1, \ldots, n} \,  dx_1 \cdots dx_n.
        \end{equation}

Along with the MOPs of type II there is a dual notion of multiple orthogonal polynomials of type
I. These are polynomials $\AA_{\vec{n},j}$, $j=1,
\dots, p$, with
\begin{equation} \label{eq3.19}
    \deg \AA_{\vec{n},j} \leq n_j - 1,
    \end{equation}
and such that the linear form
\begin{equation} \label{eq3.20}
    \mathcal R_{\vec{n}}(x) = \sum_{j=1}^p \AA_{\vec{n}, j}(x) w_j(x)
    \end{equation}
satisfies the orthogonality conditions
\begin{equation} \label{eq3.21}
    \int x^k \mathcal R_{\vec{n}}(x) dx = \begin{cases} 0 & \mbox{ for } j=0, \dots, |\vec{n}| -2, \\
    1 & \mbox{ for } k = |\vec{n}| -1. \end{cases}
    \end{equation}
Again, in the situation of a MOP ensemble \eqref{eq3.8} the MOPs of
type I  and the  form \eqref{eq3.20} uniquely exist. In addition,
$\mathcal R_{\vec{n}}$ satisfies
\[ \int_{-\infty}^{\infty} \frac{\mathcal R_{\vec{n}}(x)}{z-x} dx =
        \mathbb E \left[ \prod_{j=1}^n (z-x_j)^{-1} \right], \qquad z \in \mathbb C \setminus \mathbb R, \]
which means that the Cauchy transform of $\mathcal R_{\vec{n}}$ is the
average of the reciprocal of the characteristic polynomial of a
random point set $x_1, \ldots, x_n$ from the ensemble
\eqref{eq3.8}.

We finish the description of the theoretical background of the MOP ensembles by mentioning the Christoffel-Darboux formula for the kernel \eqref{eq3.14},  found first in \cite{9} (see also \cite{DK}):
\begin{equation} \label{eq3.23}
    (x-y) K_n(x,y) = P_{\vec{n}}(x) \mathcal R_{\vec{n}}(y)
    - \sum_{j=1}^p \frac{h_{\vec{n}},j}{h_{\vec{n}-\vec{e}_j,j}} P_{\vec{n}-\vec{e}_j}(x) \mathcal R_{\vec{n}+\vec{e}_j}(y),
    \end{equation}
where
\begin{equation} \label{eq3.22}
    h_{\vec{n},j} = \int P_{\vec{n}}(x) x^{n_j} w_j(x) dx, \qquad j = 1, \ldots, p,
    \end{equation}
and $\vec{e}_j = ( \delta_{i,j})_{i=1, \ldots, p}$ is the vector of
length $p$ with $1$ at the $j$-th position and $0$ otherwise. It is assumed that
all  multi-indices $\vec{n} \pm \vec{e}_j$ for $j=1, \ldots, p$, are normal, so that the
polynomials and linear forms exist, as well as $h_{\vec{n},j}\neq 0$.
For $p=1$  formula \eqref{eq3.23} reduces to the standard
Christoffel-Darboux formula for orthogonal polynomials.

\medskip

In this paper we consider two weight functions ($p=2$) on the positive semi axis $\R_+=[0,+\infty)$,
$$
w_1(x)=   \frac{1}{\sinh \frac{\pi \sqrt{x}}{2}}, \quad w_2(x)=  \frac{1}{\sqrt{x} \cosh \frac{\pi \sqrt{x}}{2}}  = \frac{\tanh \frac{\pi \sqrt{x}}{2}}{\sqrt{x}}w_1(x) .
$$

For $\vec{n} = (n_1,n_2) \in \mathbb{Z}_+^2 \setminus \{\vec 0\}$ the corresponding  multiple orthogonal polynomial $P_{\vec{n}}$ of type II of degree $\leq n_1 +n_2$, not identically equal to zero, satisfies the conditions
$$
\int_0^{+\infty} x^k P_{\vec{n}}(x)  w_j(x) dx=0 , \qquad k =0,\ldots, n_j-1, \quad j=1,2.
$$
Decomposing $\tanh (\pi z/2)/z$ in simple fractions, it is easy to check that
\[ \frac{ \tanh \frac{\pi \sqrt{z}}{2}}{\sqrt{z}} = \frac{4}{\pi} \sum_{k \geq 0} \frac{1}{z+ (2k+1)^2} = \int\frac{d\sigma_2(x)}{z-x},
\]
where
\[ \sigma_2 =  \frac{4}{\pi} \sum_{k \in {\mathbb{Z}}_+} \delta_{-(2k + 1)^2}\,.
\]
In the well established terminology these formulas mean that the absolutely continuous measures $d\mu_j(x)=w_j(x)dx$ on $\R_+$, $j=1, 2$, form a \emph{Nikishin system} $\mathcal{N}(\sigma_1,\sigma_2)$ generated by $\sigma_1 = \mu_1$, supported on $\mathbb{R}_+$, and the discrete measure $\sigma_2$ whose support is contained in $(-\infty,0)$.
By \cite[Theorem 1.3]{FL} it follows that $P_{\vec{n}}$ is uniquely determined up to a constant factor and $\deg P_{\vec{n}} = n_1 +n_2$; in other words, in our problem all indices ${\vec{n}}\in \mathbb{Z}_+^2 \setminus \{\vec 0\}$ are \emph{normal}\footnote{We wish to remark that $\sigma_2$ is not finite and the two cited theorems from \cite{FL} require this assumption. However, it is easy to see that they are true also in our context: all what is really needed from the measure $\sigma_2$ for their proof is the existence of $\int (z-x)^{-1}d\sigma_2(x)$  for all $z$ outside the support of $\sigma_2$, which is clearly the case.}. Using \cite[Theorem 1.2]{FL} we also know that all zeros of $P_{\vec n}$ are simple and lie in $(0,+\infty)$.
  In the sequel we  normalize $P_{\vec{n}}$ to be monic.

Here we are  interested in the re-scaled asymptotic behavior of the diagonal sequence of polynomials $\left(P_{\vec{n}}\right)$, ${\vec n} = (n,n)$, $n \in \mathbb{N}$.
For simplicity, we adopt the notation $P_n = P_{\vec{n}}$, with $\deg P_n = 2n$.
The monic rescaled polynomials
\begin{equation}
\label{def:rescaled}
Q_n(x) = c_n P_n(4 n^2 x)= x^{2n}+\text{lower degree terms}, \quad c_n = (4n^2)^{-2n},
\end{equation}
are characterized by the orthogonality conditions
\begin{equation} \label{eq:1a}
\int_0^{+\infty} x^kQ_{n}(x) w_{j,n} (x) dx=0 , \qquad k =0,\ldots, n-1, \quad j=1,2,
\end{equation}
where
\begin{equation}
\label{defWrescaled}
w_{1,n}(z):= \frac{1}{\sinh \left(\pi n z^{1/2}\right)} , \quad  w_{2,n}(z):= \frac{1}{z^{1/2} \cosh \left(\pi n z^{1/2}\right)}.
\end{equation}

Given a smooth oriented curve on the plane, we use the subindex $+$ (resp., $-$) to denote the left (resp., the right) side of the curve and the boundary values of any function from the corresponding   side induced by the given orientation. In the case of $\R$ we use standard orientation, so the $+$-side is reached from the upper half plane and the $-$-side from the lower one. Also, unless we explicitly say otherwise, we adopt the convention that $z^{1/2}$ denotes the main branch of the square root in $\C\setminus \R_-$, positive on $\R_+$, while $\sqrt{x}=x^{1/2}$, is its restriction to $x\geq 0$. In particular, the $w_{j,n}$
 are holomorphic and non-vanishing in $\C\setminus \R_+$.

\begin{remark}
It might be convenient to point out the following relation with the multiple orthogonality considered in \cite{MR2470930}. There, the orthogonality weights were (after an appropriate rescaling)
\begin{align*}
\mathfrak{w}_{1,n}(x) & = x^{\alpha/2}\exp\left(-\frac{nx}{t(1-t)}\right)I_{\alpha}\left(\frac{2n\sqrt{ax}}{t}\right), \\
\mathfrak{w}_{2,n}(x) & = x^{(\alpha+1)/2}\exp\left(-\frac{nx}{t(1-t)}\right)I_{\alpha+1}\left(\frac{2n\sqrt{ax}}{t}
\right).
\end{align*}
Using the definition of the modified Bessel function and setting here $\alpha = -1/2$ and $2 \sqrt{a}=\pi t$, we get
\begin{align*}
\mathfrak{w}_{1,n}(x) & = \frac{1}{\pi \sqrt{2 n}}\, \exp\left(-\frac{nx}{t(1-t)}\right)  \frac{1}{x\,  w_{2,n}(x)}, \\
\mathfrak{w}_{2,n}(x) & = \frac{1}{\pi \sqrt{2 n}}\,\exp\left(-\frac{nx}{t(1-t)}\right) \frac{1}{  w_{1,n}(x)}.
\end{align*}
In particular, with this choice of the parameters,
$$
\frac{ w_{2,n}(x)}{ w_{1,n}(x)} = x \, \frac{ \mathfrak{w}_{2,n}(x)}{ \mathfrak{w}_{1,n}(x)},
$$
which explains the connections of our analysis in the following sections with that in \cite{MR2470930}.
\end{remark}

The strong asymptotics of the MOP $Q_n$ is described in the following result:
 \begin{theorem}\label{asymptoticsFinal}
Let
$$
\mathcal H(\zeta)=\frac{  \zeta  }{\sqrt{2}} \left( \frac{   1+\zeta   }{ \zeta^2 +\zeta -1 }\right)^{1/2}
$$
denote the holomorphic branch in $\C \setminus (-\infty, (-1+\sqrt{5})/2)$, normalized by $\mathcal H(1)=1$.
Then:
\begin{enumerate}
\item[(i)]  for $z\in \C\setminus [0, p_+]$, with $p_+=\left( \frac{2}{\sqrt{5}-1}\right)^5$, 
\begin{equation} \label{outerForQ1}	
Q_n(z)=  e^{n g_1(z)}  \, \mathcal H(  \zeta_1 (z))    \left(1 +  \mathcal{O} \left( \frac{1}{n (|z|+1)}\right) \right),
\end{equation}
locally uniformly away from the interval $[0, p_+]$.
\item[(ii)] in a small neighborhood of $(0,p_+)$ in the upper half plane,
\begin{equation} \label{outerForQ2}	
Q_n(z)=  e^{n g_1(z) }    \left(  \mathcal H(  \zeta_1 (z)) +  e^{n \psi(z)} \mathcal H(  \zeta_2 (z))  +  \mathcal{O} \left( \frac{1}{n }\right) \right).
\end{equation}
In particular, on compact subsets of  $(0,p_+)$,
\begin{equation} \label{outerForQ3}	
Q_n(x)=     \left( e^{n g_{1+}(x) }  \mathcal H(  \zeta_{1+} (x)) +  e^{n g_{1-}(x) }  \mathcal H(  \zeta_{1-} (x))  +  \mathcal{O} \left( \frac{1}{n }\right) \right).
\end{equation}

\end{enumerate}
Here $\zeta_1$, $\zeta_2$ are the holomorphic branches of the algebraic function $\zeta(z)$ defined by the equation
$$
z = \frac{1+\zeta}{\zeta^2 (1-\zeta)},
$$	
normalized by
\begin{align*}
\zeta_{1} (z) & = 1 +\mathcal{O} \left(\frac{1}{z } \right),  \quad
\zeta_{2} (z)   =\frac{1}{z^{1/2}}+\mathcal{O} \left(\frac{1}{z } \right),  \quad z\to \infty,
\end{align*}
and $g_1(z)=\int \log (z-t) \, d\lambda_1(t)$ is given in terms of the first component $\lambda_1$ of the  unique solution of a vector equilibrium problem described in Proposition~\ref{teo:2} below.
\end{theorem}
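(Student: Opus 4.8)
The plan is to run the Deift--Zhou nonlinear steepest descent analysis on the $3\times 3$ Riemann--Hilbert problem (RHP) characterizing the type-II MOP for a Nikishin system with $p=2$, along the lines used for the modified Bessel weights in \cite{MR2470930}, feeding in Sorokin's weak asymptotics to pin down the equilibrium measures.

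\textbf{Step 1 (RHP and the spectral curve).} Write the RHP for $Y=Y_n$: a $3\times3$ matrix, analytic in $\C\setminus\R$, with $Y_{11}=Q_n$, a lower-triangular unipotent jump on $\R_+$ carrying $2\pi i\,w_{1,n}$ and $2\pi i\,w_{2,n}$, and $Y(z)\,\mathrm{diag}(z^{-2n},z^{n},z^{n})\to I$ as $z\to\infty$. The discreteness of $\sigma_2$ enters through $w_{2,n}/w_{1,n}=z^{-1/2}\tanh(\pi n z^{1/2})$, the Cauchy transform of the rescaled atomic measure $\sigma_{2,n}$ on $\R_-$; following Sorokin one either retains an explicit row of residues at the atoms or, equivalently, trades them for an analytic jump across a contour encircling $\supp\sigma_{2,n}$. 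The relevant curve is the cubic $z=(1+\zeta)/(\zeta^2(1-\zeta))$: since $z'(\zeta)=z(\zeta)\cdot\frac{2(\zeta^2+\zeta-1)}{\zeta(1-\zeta^2)}$, its only finite branch points are at $z=p_+=\phi^5$ and $z=-\phi^{-5}$, with $\phi=(1+\sqrt5)/2$, plus ramification over $z=0$ and $z=\infty$; Riemann--Hurwitz gives genus $0$, which is exactly why the final answer is explicit. The three branches $\zeta_1,\zeta_2,\zeta_3$ with the normalizations of the statement are real off the cuts $[0,p_+]$ (gluing sheets $1,2$) and $(-\infty,-\phi^{-5}]$ (gluing sheets $2,3$); in particular $\zeta_1$ is single-valued on $\C\setminus[0,p_+]$, which is the domain in (i).

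\textbf{Step 2 ($g$-functions and normalization).} Take the unique minimizer $(\lambda_1,\lambda_2)$ of the vector equilibrium problem of Proposition~\ref{teo:2}, whose first component is supported on $[0,p_+]$ — this is where Sorokin's result is used, together with the upper constraint $\lambda_2\le\sigma_{2,n}$ forced by the atoms. Set $g_j(z)=\int\log(z-t)\,d\lambda_j(t)$ for $j=1,2$, let $g_3$ be the third $g$-function fixed by the trace condition, and let $\psi$ be the phase function of Proposition~\ref{teo:2}, which is purely imaginary on $(0,p_+)$ with $\operatorname{Re}\psi<0$ off it. Conjugate $Y\mapsto T:=e^{-nL}\,Y\,e^{\,n\,\mathrm{diag}(g_1,g_2,g_3)}$ with a diagonal constant matrix $e^{nL}$ chosen so that $T\to I$ at infinity; the variational (in)equalities of the equilibrium problem make the jumps of $T$ constant oscillatory blocks on the cuts and $I+\mathcal O(e^{-cn})$ elsewhere, the exponential smallness being carried by $\operatorname{Re}(g_1-g_2)<0$ and $\operatorname{Re}(g_2-g_3)<0$.

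\textbf{Step 3 (lenses, global and local parametrices).} Factor the oscillatory jumps on $[0,p_+]$, and on the free part of the $\lambda_2$-support, into triangular matrices whose off-diagonal entries carry $e^{\pm n\psi}$ (resp.\ the analogous phase), and deform onto lens contours; on the lens boundaries these entries decay exponentially, leaving an RHP $S$ with constant jumps on the cuts. On any subinterval where $\lambda_2\le\sigma_{2,n}$ is active one inserts the standard saturated-region analysis, which is subdominant in the region of interest. Build the global parametrix $N$ (analytic off the cuts, with the constant jumps of $S$, $N\to I$ at $\infty$): being on a genus-$0$ curve, $N$ is explicit in terms of $\zeta_1,\zeta_2,\zeta_3$ and a Szeg\H{o}-type function for the analytic part of the weights, and one checks that its first entry is $N_{11}(z)=\mathcal H(\zeta_1(z))$ by verifying the correct square-root branching at $p_+$ and $-\phi^{-5}$, analyticity on the rest of $\C\setminus[0,p_+]$, the normalization $\mathcal H(1)=1$ at infinity, and that $e^{ng_1}N_{11}$ is monic of degree $2n$. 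At $z=p_+$ (and $z=-\phi^{-5}$) insert the Airy parametrix; at the hard edge $z=0$ insert the $3\times3$ Bessel/Meijer-$G$ parametrix of \cite{MR2470930} with parameter $\alpha=-1/2$, which for this value is elementary (built from $\sinh$ and $\cosh$, as befits the Pollaczek case) — this matches $N$ on the boundary circles up to $\mathcal O(1/n)$.

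\textbf{Step 4 (small-norm argument, unfolding, obstacles).} With $P$ the combined global/local parametrix, $R:=S\,P^{-1}$ solves an RHP with jumps $I+\mathcal O(1/n)$ supported on bounded contours, so the Cauchy operator gives the sharper $\mathcal O\!\left(1/(n(|z|+1))\right)$ far out, and the small-norm theorem yields $R=I+\mathcal O\!\left(1/(n(|z|+1))\right)$. Unwinding $Y=e^{nL}\,T\,e^{-n\,\mathrm{diag}(g_1,g_2,g_3)}$ and $T=R\,N$ in the exterior region gives \eqref{outerForQ1}; inside a lens over $(0,p_+)$ the extra triangular factor adds the term $N_{12}\,e^{n\psi}=\mathcal H(\zeta_2)\,e^{n\psi}$, giving \eqref{outerForQ2}; and taking boundary values from above and below, where $\zeta_{1\pm}=\zeta_{2\mp}$ and $g_{1+},g_{1-}$ differ by $\psi_\pm$, yields \eqref{outerForQ3}. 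I expect the genuinely delicate points to be: (a) incorporating the infinite atomic measure $\sigma_2$ into the RHP and controlling, uniformly in $n$, the saturated region where its upper constraint is active — the accumulation of atoms at $0$ together with their escape to $-\infty$ makes the choice of contours on $\R_-$ non-routine, and this is precisely where Sorokin's weak-asymptotics result is indispensable; and (b) the hard-edge parametrix at $z=0$ within the $3\times3$ framework and its matching with $N$, together with the bookkeeping needed to make the error genuinely uniform, with the stated rate, all the way to $z=\infty$.
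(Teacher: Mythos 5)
Your overall architecture (RHP for the $3\times 3$ matrix with $Y_{11}=Q_n$, the genus-zero spectral curve $z=(1+\zeta)/(\zeta^2(1-\zeta))$ with branch points $p_\pm$, $g$-function conjugation driven by Sorokin's equilibrium measures, lenses, a global parametrix with $N_{11}=\mathcal H(\zeta_1)$, Airy parametrices at the soft edges, a hard-edge parametrix at the origin, and a small-norm argument followed by unwinding) coincides with the paper's, and your Step 4 — the exterior formula giving \eqref{outerForQ1}, the extra triangular lens factor contributing $e^{n\psi}\mathcal H(\zeta_2)$ for \eqref{outerForQ2}, and the boundary values giving \eqref{outerForQ3} — is exactly how the paper concludes. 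Your trace-zero triple of $g$-functions is just a reparametrization of the paper's $\diag(e^{-n(g_1+\omega)},e^{n(g_1-g_2)},e^{ng_2})$.

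The genuine gap is in Step 1, at precisely the point you flag yourself as ``non-routine'': how the second Nikishin weight enters the RHP. You propose either residue conditions at the atoms of $\sigma_{2,n}$ or an analytic jump on a contour encircling $\supp\sigma_{2,n}$, but carry out neither, and neither is easy: the atoms $-((2k+1)/(2n))^2$ accumulate at the origin with spacing $\mathcal O(1/n^2)$ and run off to $-\infty$, so a uniform treatment of infinitely many residues, or of a contour enclosing them, is a serious obstacle. The paper avoids this entirely. Its RHP (RH-Y1)--(RH-Y3) has a single smooth jump on $\R_+$ carrying the two scalar weights $w_{1,n},w_{2,n}$, both of which extend holomorphically to $\C\setminus\R_-$ through $\upsilon=e^{\pi z^{1/2}}$; the first transformation \eqref{defX1}--\eqref{defX2} is a \emph{global} opening of lenses with the explicit $2\times 2$ matrices $\bm A_L,\bm A_R$ built from the identities \eqref{relationsW}. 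This collapses the jump on $\R_+$ to a single $(1,2)$ entry, and the monodromy of $\upsilon$ and of $z^{1/2}$ across $\R_-$ \emph{creates} the jumps on $(-\infty,p_-)$ and $(p_-,0)$ that encode the second measure and its constraint: no atoms ever appear in the Riemann--Hilbert problem, and the discrete measure survives only in the equilibrium problem. Without this (or an equivalent) device your Step 1 does not yield a tractable RHP, so the proposal has a hole exactly where the paper's main new idea sits. A smaller mismatch: at the hard edge the paper does not import the $\alpha=-1/2$ parametrix of \cite{MR2470930}; after its transformations the local model at $0$ is the standard Bessel problem of order $0$ (built from $I_0$, $K_0$, $H_0^{(1)}$, $H_0^{(2)}$) acting in the $(1,2)$ block, the factor $(1-\upsilon^{-4n})/4\sim\pi n z^{1/2}$ supplying the required vanishing, so your ``elementary $\sinh$/$\cosh$ parametrix'' would have to be reworked.
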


\begin{remark}
Taking into account the definition of $g_1$, formula \eqref{outerForQ1} obviously implies the $n$-th root asymptotic result from \cite{Sorokin:2009fk}:
$$
\lim_n \frac{1}{n} \log |Q_n(z)| = - \mathcal P^{\lambda_1}(z),
$$
where $\mathcal P^\lambda$ is the logarithmic potential of $\lambda$ defined in \eqref{defPotentialandEnergy}.
\end{remark}

Regarding the CD kernel $K_n$, introduced in \eqref{eq3.14}, we have 
 \begin{theorem}
\label{asymptoticsKernel} For the rescaled weights $w_{j,n}$ defined in \eqref{defWrescaled},  the limiting mean density of the positions of the particles from the corresponding multiple polynomial ensemble exists and is supported on $[0,p_+]$:
$$
\lim_{n\to \infty} \frac{1}{n} K_n(x,x)= \lambda_1'(x), \quad x\in (0,p_+),
$$ 
where $\lambda_1$ has the same meaning as in Theorem~\ref{asymptoticsFinal}.

Moreover, for $x^*\in (0,p_+)$,
$$
\lim_{n\to \infty} \frac{1}{n\lambda_1'(x^*)} K_n\left( x^* + \frac{x}{n \lambda_1'(x^*)}, x^* + \frac{y}{n \lambda_1'(x^*)}\right)=\frac{\sin \pi(x-y)}{\pi (x-y)},
$$ 
uniformly for $x$ and $y$ on compact subsets of $\R$.
 \end{theorem}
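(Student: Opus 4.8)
The plan is to reduce the statement to asymptotics of the Riemann--Hilbert matrix that are essentially already produced by the steepest--descent analysis behind Theorem~\ref{asymptoticsFinal}. Recall that for $p=2$ the MOP ensemble \eqref{eq3.14} is encoded by the solution $Y$ of the $3\times 3$ Riemann--Hilbert problem whose first row reproduces $P_{\vec n}$ (hence $Q_n$ after rescaling) together with the Cauchy transforms $\int P_{\vec n}(s)w_{j,n}(s)(s-z)^{-1}ds$, and whose last two rows reproduce, up to normalization, the type~I forms $\mathcal R_{\vec n}$; in the standard way one has
\begin{equation*}
K_n(x,y)=\frac{1}{2\pi i\,(x-y)}\begin{pmatrix}0 & w_{1,n}(y) & w_{2,n}(y)\end{pmatrix}Y_+^{-1}(y)\,Y_+(x)\begin{pmatrix}1\\0\\0\end{pmatrix},\qquad x,y\in(0,p_+).
\end{equation*}
Since $x^{*}\in(0,p_+)$ is an interior point of the band, away from the hard edge $0$ and the soft edge $p_+$, only the global (outer) parametrix together with the oscillatory factors produced by the opening of lenses in the $g$-function transformation enter the local analysis there; the Bessel/Airy-type local parametrices at the endpoints are irrelevant for a bulk point.

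The first step is therefore to record the asymptotics of $Y$ on compact subsets of $(0,p_+)$ underlying Theorem~\ref{asymptoticsFinal}: the transformations $Y\mapsto T\mapsto S\mapsto R$ give $Y=(\text{explicit factors built from }e^{ng_1},\,\mathcal H(\zeta_1),\,\mathcal H(\zeta_2))\cdot(I+\mathcal O(1/n))$, the coupling of the two oscillating sheets being governed by $\zeta_2$ and by $\psi$ from \eqref{outerForQ2}; the discreteness of $\sigma_2$ and the non-compactness of the supports affect only the conjugate band located on $\R_-$ and do not interfere here. Next one uses the variational conditions characterizing $\lambda_1$ in Proposition~\ref{teo:2} to identify the phase velocity on $(0,p_+)$:
\begin{equation*}
g_{1+}(x)-g_{1-}(x)=-2\pi i\int_x^{p_+}\lambda_1'(t)\,dt,\qquad\text{so}\qquad \frac{d}{dx}\big(g_{1+}(x)-g_{1-}(x)\big)=2\pi i\,\lambda_1'(x),
\end{equation*}
which is exactly the rate that will manufacture the sine kernel.

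With these ingredients the conclusion follows from the usual bulk computation. Substituting $x=x^{*}+u/(n\lambda_1'(x^{*}))$, $y=x^{*}+v/(n\lambda_1'(x^{*}))$ into the kernel formula, the outer-parametrix factors and the $w_{j,n}$, being continuous and $n$-independent, evaluate at $x^{*}$ to leading order and cancel between $Y_+(x)$ and $Y_+^{-1}(y)$; what survives is the product of the two scalar oscillatory exponentials coming from the diagonal part of the $g$-function conjugation, which by the displayed phase-velocity identity and a first-order Taylor expansion become $e^{\pm i\pi(u-v)}$. The surviving $2\times2$ minor then collapses to
\begin{equation*}
\lim_{n\to\infty}\frac{1}{n\lambda_1'(x^{*})}K_n\!\left(x^{*}+\frac{u}{n\lambda_1'(x^{*})},\,x^{*}+\frac{v}{n\lambda_1'(x^{*})}\right)=\frac{e^{i\pi(u-v)}-e^{-i\pi(u-v)}}{2\pi i\,(u-v)}=\frac{\sin\pi(u-v)}{\pi(u-v)},
\end{equation*}
uniformly for $u,v$ on compact sets; taking $u=v=0$ gives at once $\tfrac1n K_n(x,x)\to\lambda_1'(x)$ on $(0,p_+)$. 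Alternatively one may run the same computation directly on the Christoffel--Darboux identity \eqref{eq3.23}, provided one also supplies the strong asymptotics of $\mathcal R_{\vec n}$, $\mathcal R_{\vec n\pm\vec e_j}$, $P_{\vec n-\vec e_j}$ and of the ratios $h_{\vec n,j}/h_{\vec n-\vec e_j,j}$ on $(0,p_+)$.

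I expect the main obstacle to be precisely that last point. Theorem~\ref{asymptoticsFinal} delivers only the $P_{\vec n}$ (first-row) asymptotics, whereas the kernel needs the full matrix $Y$ — in particular the rows attached to the type~I forms and the behaviour under the shifts $\vec n\mapsto\vec n\pm\vec e_j$. Because the steepest descent produces the whole matrix $Y=R\cdot(\text{parametrices})$ with $R=I+\mathcal O(1/n)$, the required estimates are in principle already contained in the analysis, but one must carry them through with care, tracking the normalizations at infinity for the shifted indices and the extra row/column forced by the Nikishin structure. A secondary technical matter is the uniformity of the $\mathcal O(1/n)$ error as $x,y$ range over an $n^{-1}$-neighbourhood of $x^{*}$, and checking that the discrete measure $\sigma_2$ and the unbounded supports — confined to the conjugate band on $\R_-$ — do not spoil the bulk estimates.
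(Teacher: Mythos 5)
Your proposal follows essentially the same route as the paper: the kernel identity \eqref{charactKernel}, unraveling the transformations so that at a bulk point of $(0,p_+)$ only the global parametrix and the boundary factors $e^{\pm n g_{1\pm}}$ survive, the local cancellation $\bm N_+^{-1}(y)\bm R_+^{-1}(y)\bm R_+(x)\bm N_+(x)=\bm I+\mathcal O(x-y)$, and the phase identity \eqref{onsupp1} which turns the surviving exponentials into the sine kernel. One small correction to your wording: the weights $w_{j,n}$ are not $n$-independent (they vary exponentially in $n$), and it is the equilibrium condition $(i)$ of Proposition~\ref{prop:summaryG} — not continuity — that combines them with the $g$-function conjugation to yield the clean factors $e^{-ng_{1\pm}(y)}$; this is precisely what the paper's computation does, and your worry about shifted indices and $h$-ratios is relevant only to the alternative Christoffel--Darboux route, which neither you nor the paper ultimately needs.
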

 
The non-linear steepest descent analysis based on the Riemann-Hilbert characterization of MOP (see Section~\ref{secRH}) allows also to obtain the limit formulas for $Q_n$ and $K_n$ close to the endpoints of $[0, p_+]$. We are not writing these formulas explicitly here, but an interested reader can easily assemble them from the expressions appearing in Section~\ref{sec:RH}.

\section{Riemann-Hilbert characterization} \label{secRH}

The starting point for our analysis is the Riemann-Hilbert interpretation of multiple orthogonality \eqref{eq3.16}, valid for the arbitrary  multi-index ${\vec{n}} = (n_1,n_2)$.

Consider the following Riemann-Hilbert problem  (RHP). Given ${\vec{n}} \in \mathbb{Z}_+^2 \setminus\{\vec 0\}$ find a $3 \times 3$ matrix function $\widehat{\bm{Y}}$, analytic in $\mathbb{C} \setminus \mathbb{R}_+$, such that:
\begin{description}
\item{(RH-$\mathbb{Y}$1)} $\widehat{\bm{Y}}$ has continuous boundary values on $\R_+$ related by the jump condition
$$
\widehat{\bm{Y}}_+(x)=\widehat{\bm{Y}}_-(x)\,\begin{pmatrix} 1 &  w_{1} (x)  &  w_{2} (x) \\
0 & 1 & 0 \\
0 & 0 & 1
\end{pmatrix}, \quad x\in (0,+\infty).
$$
\item{(RH-$\mathbb{Y}$2)} $\widehat{\bm{Y}}(z)=\left( \bm I + \mathcal O(z^{-1}) \right) \diag \left(z^{n_1+n_2}, z^{-n_1}, z^{-n_2} \right)$, as $z\to \infty$, $z\in \C\setminus \R_+ $, where $\bm I$ stands for the $3\times 3$ identity matrix.
\item{(RH-$\mathbb{Y}$3)} $\widehat{\bm{Y}}(z)=\mathcal O \left(\begin{array}{c|c|c}1 & |z|^{-1/2} & |z|^{-1/2}  \end{array}\right)$, $z\to 0$, $z\in \C\setminus \R_+ $.
\end{description}

\begin{proposition}\label{lemma:uniqueness}
For each ${\vec{n}} \in \Z^2_+ \setminus \{\vec 0\}$, the problem (RH-$\mathbb{Y}$1)--(RH-$\mathbb{Y}$3) has a unique solution which is given by the matrix 
\[ \bm{P}_{\vec{n}}(z) =
\begin{pmatrix}
P_{\vec{n}}(z) & \frac{1}{2\pi i}\int \frac{P_{\vec{n}}(x) w_1(x) d x}{x-z} & \frac{1}{2\pi i}\int \frac{P_{\vec{n}}(x) w_2(x)d x}{x-z} \\
d_1 P_{{\vec{n}} - {\vec e}_1}(z) & \frac{d_1}{2\pi i}\int \frac{P_{{\vec{n}} - {\vec e}_1}(x) w_1(x) d x}{x-z} & \frac{d_1}{2\pi i}\int\frac{P_{{\vec{n}} - {\vec e}_1}(x) w_2(x) d x}{x-z} \\
{d_2} P_{{\vec{n}} - {\vec e}_2}(z) & \frac{d_2}{2\pi i}\int \frac{P_{{\vec{n}} - {\vec e}_2}(x) w_1(x) d x}{x-z} & \frac{d_2}{2\pi i}\int\frac{P_{{\vec{n}} - {\vec e}_2}(x) w_2(x) d x}{x-z} \\
\end{pmatrix},
\]
where ${\vec e}_1 = (1,0), {\vec e}_2 = (0,1)$, and
\begin{equation}
\label{coefDD}
 d_j^{-1} = d_{{\vec{n}},j}^{-1} = \frac{-1}{2\pi i}\int x^{n_j-1} P_{{\vec{n}}-{\bf e}_j}(x) w_j(x) dx, \qquad j=1,2.
\end{equation}
\end{proposition}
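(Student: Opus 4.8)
The plan is to first establish uniqueness by the standard determinant argument, and then verify that the explicit matrix $\bm P_{\vec n}$ solves all three conditions. For uniqueness, I would note that the jump matrix in (RH-$\mathbb Y$1) has determinant $1$, so $\det \widehat{\bm Y}$ is analytic across $\R_+$; the mild singularity allowed at $0$ in (RH-$\mathbb Y$3) is not strong enough to produce a pole (the product of the three column orders is $|z|^{-1}$, which is integrable and hence removable for the scalar $\det\widehat{\bm Y}$), and at infinity $\det\widehat{\bm Y}(z)\to 1$ by (RH-$\mathbb Y$2) since $n_1+n_2-n_1-n_2=0$. By Liouville, $\det\widehat{\bm Y}\equiv 1$, so any solution is invertible; if $\widehat{\bm Y}^{(1)}$ and $\widehat{\bm Y}^{(2)}$ are two solutions, then $\widehat{\bm Y}^{(1)}(\widehat{\bm Y}^{(2)})^{-1}$ has no jump on $\R_+$, only a removable singularity at $0$, and tends to $\bm I$ at infinity, hence equals $\bm I$.

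For existence, I would check the three conditions for the explicit matrix. Condition (RH-$\mathbb Y$1): writing $C_j(z)=\frac{1}{2\pi i}\int \frac{f(x)w_j(x)}{x-z}\,dx$ for $f$ a polynomial, the Sokhotski--Plemelj formula gives $C_{j,+}(x)-C_{j,-}(x)=-f(x)w_j(x)$ on $\R_+$ — wait, with the orientation convention in the paper one gets precisely the stated jump; I would be careful here with the sign and with the fact that the first column has no jump. A short computation then shows $\widehat{\bm Y}_-^{-1}\widehat{\bm Y}_+$ equals the stated upper triangular matrix, because each row of $\bm P_{\vec n}$ is built from the \emph{same} polynomial in the first entry and its two weighted Cauchy transforms in the second and third.

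Condition (RH-$\mathbb Y$3) follows because $w_j$ has an integrable singularity of order $x^{-1/2}$ at $0$ (recall $w_1(x)\sim 2/(\pi\sqrt x)$ and $w_2(x)\sim 1/\sqrt x$ as $x\to 0^+$), so the Cauchy transforms are $\mathcal O(|z|^{-1/2})$ while the first-column polynomial entries stay bounded; this gives exactly the column-wise orders claimed. Condition (RH-$\mathbb Y$2) is where the orthogonality relations \eqref{eq3.16} enter and is the crux of the argument: expanding $\frac{1}{x-z}=-\sum_{k\ge 0} x^k z^{-k-1}$ for large $z$, the first $n_j$ moments of $P_{\vec n} w_j$ vanish by \eqref{eq3.16}, so $C_j(z)=\mathcal O(z^{-n_j-1})$; for the second row, $\deg P_{\vec n-\vec e_1}=n_1+n_2-1$ so the $(2,1)$ entry $d_1P_{\vec n-\vec e_1}(z)=\mathcal O(z^{-n_1})\cdot z^{n_1}$ after pulling out the diagonal factor, while the orthogonality of $P_{\vec n-\vec e_1}$ against $x^k w_1$ for $k\le n_1-2$ together with the normalization \eqref{coefDD} (which fixes the coefficient of $z^{-n_1}$ in the $(2,2)$ entry to be $1$) yields the $\diag(z^{n_1+n_2},z^{-n_1},z^{-n_2})$ behavior with the identity leading term; the third row is handled symmetrically using $P_{\vec n-\vec e_2}$ and the second normalization in \eqref{coefDD}. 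The main obstacle is simply bookkeeping: matching, row by row and entry by entry, the prescribed powers of $z$ against the correct number of vanishing moments, and confirming that \eqref{coefDD} is exactly the normalization needed to make the off-diagonal blocks of the $z^{-1}$-expansion consistent with $\bm I+\mathcal O(z^{-1})$. Since all multi-indices $\vec n$ are normal (as noted in the excerpt, via \cite{FL}), the integrals in \eqref{coefDD} are nonzero and $\bm P_{\vec n}$ is well defined, completing the proof.
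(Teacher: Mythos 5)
Your existence verification is essentially the paper's own argument: Sokhotski--Plemelj for (RH-$\mathbb{Y}$1), the behaviour $w_j(x)=\mathcal O(x^{-1/2})$ as $x\to 0+$ for (RH-$\mathbb{Y}$3), and the moment expansion of the Cauchy transforms together with the normalization \eqref{coefDD} for (RH-$\mathbb{Y}$2); apart from minor bookkeeping slips (the sign in Plemelj, the power count for the $(2,1)$ entry) that part is fine. For uniqueness, however, you take the determinant--Liouville route, whereas the paper argues differently: it deduces from the RHP that the first column must consist of the MOPs for the indices $\vec n$, $\vec n-\vec e_1$, $\vec n-\vec e_2$, identifies the remaining columns as their second-kind functions, and invokes normality. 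That choice is not cosmetic, and your version of the determinant argument has a genuine gap at the hard edge $z=0$.

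The gap is the claim that the product of the column orders at the origin being $|z|^{-1}$ is ``integrable and hence removable.'' Integrability is not a removability criterion for isolated singularities of analytic functions ($1/z$ is locally area-integrable and has a pole). From $\det\widehat{\bm Y}(z)=\mathcal O(|z|^{-1})$ one only concludes that $z\det\widehat{\bm Y}(z)$ is bounded, i.e.\ there is at most a simple pole; the Laurent-coefficient estimate kills $a_n$ only for $n\le -2$, and to remove the $a_{-1}$ term one needs $o(|z|^{-1})$, which the crude column-wise bound does not give here because \emph{two} columns each carry a full $|z|^{-1/2}$ singularity. Closing this requires the refined local structure: since $w_j(x)=c_jx^{-1/2}\left(1+\mathcal O(x)\right)$, the singular parts of columns $2$ and $3$ near $0$ are both proportional to the \emph{same} vector (the value of column $1$ at the origin), so the $|z|^{-1}$ contributions cancel in the determinant and $\det\widehat{\bm Y}=\mathcal O(|z|^{-1/2})$, which is then genuinely removable. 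The same issue recurs, in worse form, when you assert that $\widehat{\bm Y}^{(1)}\bigl(\widehat{\bm Y}^{(2)}\bigr)^{-1}$ has a removable singularity at $0$: entrywise this ratio is again only $\mathcal O(|z|^{-1})$, so Liouville yields $\bm I+\bm C/z$ and you must still prove $\bm C=0$ (e.g.\ by showing $\bm C$ annihilates three linearly independent vectors extracted from the regular parts of the columns at $0$). The paper's normality argument avoids all of this because it only ever needs removability of $\mathcal O(|z|^{-1/2})$ singularities of \emph{scalar} functions, which is immediate; if you keep the determinant route, you must add the cancellation step above.
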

Notice that the normality of the multi-indices ${\vec{n}} = (n_1,n_2)$ guarantees that the integrals in \eqref{coefDD} are non-vanishing.

\begin{proof}
The proof is basically contained in \cite{Assche01}; however,  the measures there are supported on the whole real line, which slightly simplifies the analysis.  We will sketch a proof here for convenience of the reader.

Using the Sokhotski-Plemelj formula and the orthogonality conditions it is easy to verify that $\bm{P}_{\vec{n}}$ satisfies (RH-$\mathbb{Y}$1)--(RH-$\mathbb{Y}$2). The condition  (RH-$\mathbb{Y}$3) is trivially satisfied by the first column, while for the second and third columns it follows from the fact that  $w_j(x) = \mathcal{O}(1/\sqrt{x}), x \to 0+, j=1,2$.

On the other hand, in the usual manner, from  (RH-$\mathbb{Y}$1)--(RH-$\mathbb{Y}$3) it is easy to deduce that the first column of $\widehat{\bm{Y}}$ has to be made up of multiple orthogonal polynomials with respect to the weights $w_j$ with the multi-indices ${\vec{n}}$, ${\vec{n}} - {\vec e}_1$, and ${\vec{n}} - {\vec e}_2$, respectively, and that the second and third columns must be the corresponding second type functions of these polynomials with respect to $w_1$ and $w_2$ normalized appropriately. The constants appearing in the second and third row are needed to guarantee (RH-$\mathbb{Y}$2). Now uniqueness follows since the multi-indices ${\vec{n}}$, ${\vec{n}} - {\vec e}_1$, and ${\vec{n}} - {\vec e}_2$ are normal, thus the corresponding monic multiple orthogonal polynomials are uniquely determined.
\end{proof}

\begin{remark} Using the expression for $\bm{P}_{\vec{n}}$ it is not difficult to verify that $\det{\bm{P}_{\vec{n}}} \equiv 1$, $z \in {\mathbb{C}}$ (fact established for $\mathbb{C}\setminus [0,+\infty)$ and extended by analyticity to the whole plane),  which is the standard tool for proving the uniqueness of $\bm{P}_{\vec{n}}$. With this approach, only the normality of the multi-index $\vec{n}$ is needed; however, the normality of the other two multi-indices is useful in order to give an explicit description of the second and third rows of $\bm{P}_{\vec{n}}$.
\end{remark}

For the rescaled polynomials \eqref{def:rescaled} we can write an analogous RHP using the  connection between $Q_n$ and $P_n$ and Proposition \ref{lemma:uniqueness}: given $n \in \N$, find a $3 \times 3$ function matrix $\bm Y$ analytic in $\mathbb{C} \setminus \mathbb{R}_+$ such that:
\begin{description}
\item{(RH-Y1)} For $x \in (0,+\infty)$ there is the jump condition
$$
\bm Y_+(x)=\bm Y_-(x)\,\begin{pmatrix} 1 & w_{1,n} (x)  & w_{2,n} (x) \\
0 & 1 & 0 \\
0 & 0 & 1
\end{pmatrix},
$$
\item{(RH-Y2)} $\bm Y(z)=\left( \bm I + \mathcal O(z^{-1}) \right) \diag \left(z^{2n}, z^{-n}, z^{-n} \right)$, as $z\to \infty$, $z\in \C\setminus \R_- $,
\item{(RH-Y3)} $\bm Y(z)=\mathcal O \left(\begin{array}{c|c|c}1 & |z|^{-1/2} & |z|^{-1/2}  \end{array}\right)$, $z\to 0$, $z\in \C\setminus \R_+$.
\end{description}

Indeed, using the change of variables $z \to 4n^2 z$, $x \to 4n^2 x$ it is easy to see that the RHP (RH-Y1)--(RH-Y3) and (RH-$\mathbb{Y}$1)--(RH-$\mathbb{Y}$3) (with ${\vec{n}} = (n,n)$) reduce to one another. From Proposition~\ref{lemma:uniqueness} it follows that (RH-Y1)--(RH-Y3) has a unique solution which may be expressed in terms of $\bm{P}_{\vec{n}}, \vec{n} = (n,n)$. In particular, the first row of $\bm Y$ is
\[ \left(Q_{n}(z),\quad  \frac{1}{2\pi i}\int_0^{+\infty} \frac{Q_{n}(x) w_{1,n}(x) d x}{x-z},\quad \frac{1}{2\pi i}\int_0^{+\infty} \frac{Q_{ n}(x) w_{2,n}(x)d x}{x-z} \right).
\]
The proof of Theorem~\ref{asymptoticsKernel} is based on the following characterization of the kernel $K_n$ in terms of the solution $\bm Y$ of (RH-Y1)--(RH-Y3), see \cite{9,DK}:
\begin{equation}
\label{charactKernel}
K_n(x,y)= \frac{1}{2\pi i (x-y)} \begin{pmatrix}
0& w_{1,n}(y) &w_{2n}(y) 
\end{pmatrix} \, \bm Y_+^{-1}(y) \bm Y_+(x)
 \begin{pmatrix}
1 \\ 0 \\ 0
\end{pmatrix}.
\end{equation}

\section{Equilibrium problem and weak asymptotics}

In the asymptotic analysis of multiple orthogonal polynomials with respect to a general Nikishin system $\mathcal{N}(\sigma_1,\sigma_2)$ in which $\sigma_2$ is discrete, the associated model vector equilibrium problem exhibits an external field acting on $\supp(\sigma_1)$ plus a constraint on $\sigma_2$. This situation is encountered, for example, in \cite{MR2470930}, as well as for Pollaczek weights $w_j$ in \cite{Sorokin:2009fk}; see \cite{ALLMF} for the analysis of a general case.

Let $\mu$ be a positive Borel measure with support contained in $\R$ and satisfying
\begin{equation}\label{C1} \int \log(1+|x|^2) d\mu(x) < \infty,
\end{equation}
(or the equivalent condition $\int \log(1+|x|) d\mu(x) < \infty$, as used in \cite{BKMW}). Its potential and logarithmic energy are defined as
\begin{equation}
\label{defPotentialandEnergy}
 \mathcal{P}^{\mu}(x) := \int \log \frac{1}{| x- y|}  d\mu (y), \quad I(\mu) := \int \int \log \frac{1}{| x- y|} d\mu(x) d\mu (y),
\end{equation}
respectively. From \eqref{C1} it follows that $I(\mu) > - \infty$. Let $\mathcal{M}_e$ be the collection of all measures $\mu$ satisfying \eqref{C1} and for which $I(\mu) < \infty$. If, additionally,
$$
|\mu|=\int_\R d\mu(x)=c, \quad c>0,
$$
 we write $\mu \in \mathcal{M}_e(c)$. When $\mu_1,\mu_2 \in \mathcal{M}_e$, their mutual energy is defined as
\[ I(\mu_1,\mu_2) = \int\int \log \frac{1}{| x- y|} d\mu_1(x) d\mu_2 (y),
\]
which is  finite and
\[ I(\mu_1 - \mu_2) = I(\mu_1) + I(\mu_2) - 2I(\mu_1,\mu_2).
\]
Moreover for $\mu_1,\mu_2 \in \mathcal{M}_e(c)$, we have
\begin{equation} \label{desig} I(\mu_1 - \mu_2) \geq 0
\end{equation}
with equality if and only if $\mu_1 = \mu_2$ (see \cite[Theorem 2.5]{CKL}, \cite[Theorem 2.1]{Sim}, and also \cite[Chapter I]{ST} if the measures have bounded support).

Let $\sigma, \supp (\sigma) = \R_-, |\sigma| > 1$, be a positive Borel measure such that for every compact subset $K \subset \R_-$ we have that $\mathcal{P}^{\sigma|_K}$ is continuous on $\C$. As usual, $\sigma|_K$ denotes the restriction of $\sigma$ to $K$. We define
\[ \mathcal{M}(\sigma) = \{\vec \mu=(\mu_1,\mu_2)^T \in \mathcal{M}_e(2) \times \mathcal{M}_e(1):\,  \supp(\mu_1) \subset \R_+, \supp(\mu_2) \subset \R_-, \mu_2 \leq \sigma\},
\]
where $(\cdot)^T$ stands for transpose.
By $\mu_2 \leq \sigma$ we mean that $\sigma - \mu_2$ is a positive measure. Since we have assumed that $\mathcal{P}^{\sigma|_K}$ is continuous on $\C$ for every compact $K$ it readily follows that $\mathcal{P}^{\mu_2}$ is continuous on $\C$.

Let $\varphi$ be a bounded from below continuous function on $\R_+$. Define
\[ A =
\left(
\begin{array}{cc}
2 & -1 \\
-1 & 2
\end{array}
\right),
\qquad
 f = \left( \begin{array}{cc}
\varphi \\
0
\end{array}
\right).
\]
For $ \vec{\mu}=(\mu_1,\mu_2)^T \in \mathcal{M}(\sigma)$ we introduce the vector function
\[ {\mathcal{W}}^{\vec{\mu}}(x) = \int \log \frac{1}{|x-y|} d A\vec{\mu}(y) + f(x) = (\mathcal{W}_1^{\vec{\mu}}(x),\mathcal{W}_2^{\vec{\mu}}(x))^T
\]
and the functional
\begin{equation} \label{func} J_{\varphi}(\vec{\mu}) = \int ({\mathcal{W}}^{\vec{\mu}}(x) + f(x)) \cdot d \vec{\mu}(x) =  \int (\mathcal{W}_1^{\vec{\mu}}(x) + \varphi(x)) d \mu_1(x) + \int \mathcal{W}_2^{\vec{\mu}}(x) d \mu_2(x).
\end{equation}
Set
\[ J_{\varphi} = \inf \{J_{\varphi}(\vec{\mu}): \vec{\mu} \in \mathcal{M}(\sigma)\}.
\]
Considering  measures with compact support, it is easy to show that there exists $\vec{\mu} \in \mathcal{M}(\sigma)$ such that $J_{\varphi}(\vec{\mu}) < \infty$; therefore, $-\infty \leq J_{\varphi} <  +\infty$.
\begin{definition} \label{def:extremal}
A vector measure $\vec{\lambda} \in \mathcal{M}(\sigma)$ is \emph{extremal} if $J_{\varphi}(\vec{\lambda}) = J_{\varphi} > -\infty$.
\end{definition}

In the study of the existence and uniqueness of an extremal measure one can combine the techniques employed in $\cite{BKMW}$ and \cite{HK} (see also \cite{ALLMF}). In \cite[Definition 1.6]{BKMW} growth conditions at infinity are imposed on the vector external field $f$ which we cannot require here (in fact the second component of $f$ is identically zero). The growth condition is used in \cite[Theorem 1.7]{BKMW} to prove the lower semi-continuity of the functional $J_{\varphi}(\cdot)$ and from there deduce the existence of an extremal measure. However, \cite[Theorem 1.8(b)]{BKMW} remains valid assuming that $J_{\varphi} > -\infty$ and that  a minimizer of the functional exists. From \cite{HK} one can use the more relaxed assumption of weak admissibility of the extremal problem (see Assumption 2.1 therein), sufficient to prove the lower semi-continuity of a certain modified functional which we introduce promptly (see \eqref{def:modified} below). For a detailed discussion see \cite[Section 4]{ALLMF}.

\begin{proposition} \label{prop1} Assume that $J_{\varphi} > -\infty$. The following statements are equivalent:
\begin{itemize}
\item[$(A)$] There  exists $\vec{\lambda} \in \mathcal{M}(\sigma)$ such that $J_{\varphi}(\vec{\lambda}) = J_{\varphi}$.
\item[$(B)$] There  exists $\vec{\lambda} \in \mathcal{M}(\sigma)$ such that $\int W^{\vec{\lambda}} \cdot d(\vec{\nu}-\vec{\lambda}) \geq 0$ for all $\vec{\nu} \in \mathcal{M}(\sigma)$.
\item[$(C)$] There  exist  $\vec{\lambda} = ( \lambda_1,  \lambda_2)^T \in \mathcal{M}(\sigma)$ and constants $\gamma_1,\gamma_2$  such that
\begin{itemize}
\item[$(i)$]
\[ \mathcal{W}_1^{\vec{\lambda}}(x) = 2 {\mathcal{P}}^{ \lambda_1}(x) - {\mathcal{P}}^{ \lambda_2}(x) + \varphi(x)
\left\{
\begin{array}{ll}
= \gamma_1, & x \in \supp( \lambda_1), \\
\geq \gamma_1, & x \in {\mathbb{R}}_+,
\end{array}
\right.
\]
\item[$(ii)$]
\[ \mathcal{W}_2^{\vec{\lambda}}(x) = 2 {\mathcal{P}}^{ \lambda_2}(x) - {\mathcal{P}}^{ \lambda_1}(x)
\left\{
\begin{array}{ll}
\leq \gamma_2, & x \in \supp( \lambda_2), \\
\geq  \gamma_2, & x \in \supp(\sigma -  \lambda_2).
\end{array}
\right.
\]
\end{itemize}
\end{itemize}
If either condition is satisfied, they all have the same unique solution and the constants $\gamma_1,\gamma_2$ are unique as well.
\end{proposition}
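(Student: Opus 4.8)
The plan is to prove the circle of implications $(A)\Rightarrow(B)\Rightarrow(C)\Rightarrow(B)\Rightarrow(A)$ by exploiting the convexity of $J_\varphi$, and then to read off uniqueness from strict convexity. The structural input is that $\mathcal{M}(\sigma)$ is convex and that $J_\varphi(\vec{\mu})=Q(\vec{\mu})+2\int f\cdot d\vec{\mu}$, where $Q(\vec{\mu})=\iint\log\frac{1}{|x-y|}\,dA\vec{\mu}(y)\cdot d\vec{\mu}(x)$ is the quadratic energy form attached to the positive definite matrix $A$. Consequently, for $\vec{\lambda},\vec{\nu}\in\mathcal{M}(\sigma)$ with $J_\varphi(\vec{\lambda}),J_\varphi(\vec{\nu})<\infty$, the restriction of $J_\varphi$ to the segment joining them is the quadratic polynomial
\[
J_\varphi\big((1-t)\vec{\lambda}+t\vec{\nu}\big)=J_\varphi(\vec{\lambda})+2t\int \mathcal{W}^{\vec{\lambda}}\cdot d(\vec{\nu}-\vec{\lambda})+t^2\,Q(\vec{\nu}-\vec{\lambda}),\qquad t\in[0,1].
\]
Its leading coefficient is non-negative: since $\vec{\nu}-\vec{\lambda}$ has zero component masses, $Q(\vec{\nu}-\vec{\lambda})=I(\nu_1-\lambda_1)+I(\nu_2-\lambda_2)+I\big((\nu_1+\lambda_2)-(\lambda_1+\nu_2)\big)$, and each summand is $\geq 0$ by \eqref{desig} (the three pairs being in $\mathcal{M}_e(2)$, $\mathcal{M}_e(1)$, $\mathcal{M}_e(3)$ respectively); moreover $Q(\vec{\nu}-\vec{\lambda})=0$ forces $\vec{\nu}=\vec{\lambda}$ by the equality case of \eqref{desig}. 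As a preliminary step I would first restrict to competitors of compact support and truncate the logarithmic kernel, in the spirit of \cite{BKMW,HK,ALLMF}, so that all the mutual energies appearing above are finite and the expansion and the differentiation below are legitimate.

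Granting the expansion, $(A)\Leftrightarrow(B)$ is formal. If $\vec{\lambda}$ minimises $J_\varphi$, then for every competitor $\vec{\nu}$ with $J_\varphi(\vec{\nu})<\infty$ the polynomial above is $\geq J_\varphi(\vec{\lambda})$ on $[0,1]$, hence its derivative at $t=0$ is non-negative, i.e. $\int \mathcal{W}^{\vec{\lambda}}\cdot d(\vec{\nu}-\vec{\lambda})\geq 0$; for $\vec{\nu}$ with $J_\varphi(\vec{\nu})=+\infty$ the inequality holds trivially, the only possibly divergent term being the external-field integral $\int\varphi\,d\nu_1=+\infty$, which makes the left-hand side $+\infty$ as well. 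This gives $(B)$. Conversely, if $(B)$ holds, evaluating the expansion at $t=1$ and using $Q(\vec{\nu}-\vec{\lambda})\geq 0$ yields $J_\varphi(\vec{\nu})\geq J_\varphi(\vec{\lambda})$ for all $\vec{\nu}\in\mathcal{M}(\sigma)$, so $\vec{\lambda}$ is a minimiser and $(A)$ holds.

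For $(B)\Leftrightarrow(C)$ I would decouple the two components. Taking in $(B)$ the competitor $\vec{\nu}=(\nu_1,\lambda_2)$ with $\nu_1$ ranging over $\mathcal{M}_e(2)$, $\supp\nu_1\subset\R_+$, reduces to $\int\mathcal{W}_1^{\vec{\lambda}}\,d(\nu_1-\lambda_1)\geq 0$ for all such $\nu_1$: this is the classical unconstrained equilibrium characterisation for a mass-$2$ measure on $\R_+$ in the external field $\varphi-\mathcal{P}^{\lambda_2}$, and the standard variational argument (test against $(1-\varepsilon)\lambda_1+2\varepsilon\,\omega_K$, $\omega_K$ the equilibrium measure of a compact $K\subset\R_+$) gives $\mathcal{W}_1^{\vec{\lambda}}\geq\gamma_1$ quasi-everywhere on $\R_+$ and $=\gamma_1$ $\lambda_1$-a.e., with $\gamma_1=\tfrac12\int\mathcal{W}_1^{\vec{\lambda}}\,d\lambda_1$; the continuity principle (using that $\mathcal{P}^{\lambda_2}$ and $\varphi$ are continuous) upgrades the equality to all of $\supp\lambda_1$, which is $(C)(i)$. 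Taking $\vec{\nu}=(\lambda_1,\nu_2)$ with $\nu_2\leq\sigma$, $|\nu_2|=1$, reduces to $\int\mathcal{W}_2^{\vec{\lambda}}\,d(\nu_2-\lambda_2)\geq 0$; here the constraint $\mu_2\leq\sigma$ produces the two-sided conclusion $(C)(ii)$, because mass may always be \emph{removed} from $\supp\lambda_2$ (forcing $\mathcal{W}_2^{\vec{\lambda}}\leq\gamma_2$ there) and \emph{added} wherever $\sigma-\lambda_2>0$ (forcing $\mathcal{W}_2^{\vec{\lambda}}\geq\gamma_2$ on $\supp(\sigma-\lambda_2)$). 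Conversely, assuming $(C)$, for an arbitrary $\vec{\nu}\in\mathcal{M}(\sigma)$ one has $\int\mathcal{W}_1^{\vec{\lambda}}\,d(\nu_1-\lambda_1)\geq\gamma_1(|\nu_1|-|\lambda_1|)=0$, and, writing $\nu_2-\lambda_2=\tau^{+}-\tau^{-}$ with $\tau^{\pm}\geq 0$ mutually singular of equal mass, $\tau^{-}\leq\lambda_2$ (so $\mathcal{W}_2^{\vec{\lambda}}\leq\gamma_2$ on $\supp\tau^{-}$) and $\tau^{+}=\max(\nu_2-\lambda_2,0)\leq\sigma-\lambda_2$ (so $\mathcal{W}_2^{\vec{\lambda}}\geq\gamma_2$ on $\supp\tau^{+}$), whence $\int\mathcal{W}_2^{\vec{\lambda}}\,d(\nu_2-\lambda_2)\geq\gamma_2(|\tau^{+}|-|\tau^{-}|)=0$; adding the two inequalities gives $(B)$.

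Finally, uniqueness. If $\vec{\lambda}$ and $\vec{\tilde\lambda}$ both satisfy $(B)$, both are minimisers by the above, so $J_\varphi(\vec{\lambda})=J_\varphi(\vec{\tilde\lambda})$; evaluating the quadratic expansion with $\vec{\nu}=\vec{\tilde\lambda}$ at $t=1$ and using $\int\mathcal{W}^{\vec{\lambda}}\cdot d(\vec{\tilde\lambda}-\vec{\lambda})\geq 0$ forces $Q(\vec{\tilde\lambda}-\vec{\lambda})=0$, hence $\vec{\lambda}=\vec{\tilde\lambda}$ by the strict positivity above. The multiplier $\gamma_1$ is then pinned by $\gamma_1=\tfrac12\int\mathcal{W}_1^{\vec{\lambda}}\,d\lambda_1$; for $\gamma_2$, since $0<|\lambda_2|=1<|\sigma|$ neither $\supp\lambda_2$ nor $\supp(\sigma-\lambda_2)$ is empty, and their union equals $\supp\sigma=\R_-$, which is connected, so the two closed sets intersect, and at any common point $x_0$ part $(C)(ii)$ gives $\gamma_2=\mathcal{W}_2^{\vec{\lambda}}(x_0)$, a finite value since $\mathcal{P}^{\lambda_1}$ and $\mathcal{P}^{\lambda_2}$ are continuous near $x_0$. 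The step I expect to be the main obstacle is the preliminary one: controlling the non-compactness of the supports and the possibility of individually infinite energies, so that the quadratic expansion, the differentiation along variations, and the quasi-everywhere-to-everywhere upgrade via the continuity principle are all rigorous; the conceptual crux, by contrast, is the bookkeeping of signs for the constrained second component leading to $(C)(ii)$.
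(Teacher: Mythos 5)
Your argument is correct and is essentially the proof the paper has in mind: the paper itself gives no proof of Proposition~\ref{prop1}, deferring to \cite{BKMW}, \cite{HK} and \cite{ALLMF}, and those sources run exactly your convexity scheme --- the quadratic expansion of $J_\varphi$ along segments of $\mathcal M(\sigma)$, positive definiteness of the interaction form via the decomposition into the three terms controlled by \eqref{desig}, the decoupled variational inequalities for the free first component and the constrained second component, and strict convexity for uniqueness. The only places needing more care than you give them are the passage from quasi-everywhere to everywhere statements (lower semicontinuity of $\mathcal W_1^{\vec\lambda}$, rather than the continuity principle, is the tool that shows $\{\mathcal W_1^{\vec\lambda}>\gamma_1\}$ is open and hence disjoint from $\supp(\lambda_1)$) and the finiteness bookkeeping you already flag, which is resolved not by truncation but by the observation that condition \eqref{C1} makes all mutual energies of members of $\mathcal M_e$ finite, so that the only possibly infinite term is $\int\varphi\,d\nu_1=+\infty$, exactly as you treat it.
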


Some additional properties are contained in (see \cite[Lemma 4.2]{ALLMF})

\begin{lemma} \label{varios} Let $\vec{\lambda}$ be extremal in the sense of Definition~\ref{def:extremal}.
Then, $\mathcal{P}^{\lambda_1},\mathcal{P}^{\lambda_2}$ are continuous in $\C$, $\supp(\lambda_2)$ is connected, and $0 \in \supp(\lambda_2)$. If $x\varphi'(x)$ is an increasing function on ${\R}_+$ then $\supp(\lambda_1)$ is connected. Should  $\varphi$ be increasing on $\R_+$ then $0 \in \supp(\lambda_1)$. Finally, if
\begin{equation}
\label{condvar*}
\lim_{x \to +\infty} (\varphi(x) - 4\log(x) )= +\infty
\end{equation}
then $\supp(\lambda_1)$ is a compact set, $\supp(\lambda_2) = \R_-$, and the $\lambda_1, \lambda_2$ verify \eqref{C1}.
\end{lemma}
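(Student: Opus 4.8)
The plan is to prove each assertion by exploiting the variational characterization in Proposition~\ref{prop1}(C), which we are allowed to assume, together with the definitions of potentials and energies. First I would establish the continuity of $\mathcal P^{\lambda_1}$ and $\mathcal P^{\lambda_2}$ in $\C$. For $\mathcal P^{\lambda_2}$ this is almost immediate: we have $\lambda_2 \le \sigma$, and by hypothesis $\mathcal P^{\sigma|_K}$ is continuous on $\C$ for every compact $K\subset\R_-$; decomposing $\sigma = \sigma|_K + \sigma|_{\R_-\setminus K}$ and using that the tail contributes a harmonic (hence continuous) function away from its support, one gets continuity of $\mathcal P^\sigma$ and then of $\mathcal P^{\lambda_2}$ from $0\le \mathcal P^\sigma-\mathcal P^{\lambda_2}=\mathcal P^{\sigma-\lambda_2}$ being superharmonic and, by the same tail argument, locally bounded — so by the principle of descent plus lower semicontinuity it is continuous. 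For $\mathcal P^{\lambda_1}$ I would use the variational equality $2\mathcal P^{\lambda_1}=\mathcal P^{\lambda_2}-\varphi+\gamma_1$ on $\supp(\lambda_1)$: since the right side is continuous on $\supp(\lambda_1)$, the restriction of $\mathcal P^{\lambda_1}$ to its own support is continuous, and continuity on all of $\C$ follows from the standard fact (continuity principle / maximum principle for potentials) that a logarithmic potential continuous on the support of its measure is continuous everywhere.

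Next, connectedness of $\supp(\lambda_2)$ and $0\in\supp(\lambda_2)$. Suppose $\supp(\lambda_2)$ is disconnected, so there is a bounded complementary interval $(a,b)\subset\R_-\setminus\supp(\lambda_2)$ with $a,b\in\supp(\lambda_2)$; on $(a,b)$ we are in $\supp(\sigma-\lambda_2)$ (as $\lambda_2$ has no mass there), so $\mathcal W_2^{\vec\lambda}\ge\gamma_2$ there, while $\mathcal W_2^{\vec\lambda}\le\gamma_2$ at $a$ and $b$; but $\mathcal W_2^{\vec\lambda}=2\mathcal P^{\lambda_2}-\mathcal P^{\lambda_1}$ is harmonic in a neighborhood of $(a,b)$ (since neither $\lambda_1$ nor $\lambda_2$ charges it), and a harmonic function on an interval that is $\le\gamma_2$ at both endpoints and $\ge\gamma_2$ throughout must be constant $=\gamma_2$ there — then one pushes this identity, using analyticity of $\mathcal P^{\lambda_2}$ off its support and the fact that $\lambda_2$ is a genuine (constrained) measure, to a contradiction with $|\lambda_2|=1$. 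For $0\in\supp(\lambda_2)$ I would argue by contradiction: if $0\notin\supp(\lambda_2)$ then $\supp(\lambda_2)\subset(-\infty,-\delta]$, and one perturbs $\lambda_2$ by moving a small amount of mass toward $0$ (this is allowed because the constraint $\sigma$ has $0\in\supp(\sigma)$ and $|\sigma|>1$, so there is room), computing that $\mathcal W_2^{\vec\lambda}$ near $0$ is strictly below its value on $\supp(\lambda_2)$ — using that $2\mathcal P^{\lambda_2}-\mathcal P^{\lambda_1}$ is a decreasing-then-controlled function near $0$ because $\mathcal P^{\lambda_1}(0)$ is small (mass of $\lambda_1$ sits on $\R_+$) — contradicting extremality via Proposition~\ref{prop1}(B).

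For the remaining statements I would proceed as follows. If $x\varphi'(x)$ is increasing on $\R_+$, connectedness of $\supp(\lambda_1)$: again suppose a bounded gap $(a,b)$ in $\supp(\lambda_1)$ with $a,b\in\supp(\lambda_1)$; on it $\mathcal W_1^{\vec\lambda}\ge\gamma_1$ with equality at the endpoints, and $\mathcal W_1^{\vec\lambda}=2\mathcal P^{\lambda_1}-\mathcal P^{\lambda_2}+\varphi$ is real-analytic on $(a,b)$; differentiating, $x(\mathcal W_1^{\vec\lambda})'(x)=x(2\mathcal P^{\lambda_1}-\mathcal P^{\lambda_2})'(x)+x\varphi'(x)$, and the convexity-type argument (the first term's derivative against $x\,d(\cdot)$ has a sign because $\lambda_2$ lives on $\R_-$ and $\lambda_1$ has no mass in $(a,b)$, combined with monotonicity of $x\varphi'(x)$) forces $\mathcal W_1^{\vec\lambda}$ to have no interior minimum equal to its endpoint values unless it is constant, again contradicting $|\lambda_1|=2$ after propagating the identity. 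If $\varphi$ is increasing, $0\in\supp(\lambda_1)$ follows from a one-sided version of the same perturbation argument (moving $\lambda_1$-mass toward $0$ lowers $\varphi$, hence lowers $J_\varphi(\vec\lambda)$, unless $0$ is already in the support). Finally, under \eqref{condvar*}: compactness of $\supp(\lambda_1)$ comes from comparing $\mathcal W_1^{\vec\lambda}(x)=2\mathcal P^{\lambda_1}(x)-\mathcal P^{\lambda_2}(x)+\varphi(x)$ as $x\to+\infty$, where $2\mathcal P^{\lambda_1}(x)\sim-4\log x$, $\mathcal P^{\lambda_2}(x)\ge -\log(1+x)+O(1)$ is bounded below suitably, so $\mathcal W_1^{\vec\lambda}(x)\ge \varphi(x)-4\log x + O(\log x)\to+\infty$ — wait, one must be careful that $-\mathcal P^{\lambda_2}(x)\to+\infty$ like $\log x$, so actually $\mathcal W_1^{\vec\lambda}(x)\ge(\varphi(x)-4\log x)+(\log x + O(1))\to+\infty$, contradicting $\le\gamma_1$ on the support, so the support is bounded; then $\supp(\lambda_2)=\R_-$ because the constraint cannot be active at $\infty$ (if $\lambda_2=\sigma$ on a neighborhood of $-\infty$ one derives a contradiction from $|\lambda_2|=1<|\sigma|$) together with connectedness and $0\in\supp(\lambda_2)$; and \eqref{C1} for both measures holds since $\supp(\lambda_1)$ is compact and $\lambda_2\le\sigma$ with $\sigma$ satisfying the requisite integrability (the continuity hypothesis on $\mathcal P^{\sigma|_K}$ forces $\int\log(1+|x|)d\sigma<\infty$ on every compact, and the growth condition controls the tail). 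The main obstacle I expect is the rigorous propagation of the "harmonic and pinned at endpoints $\Rightarrow$ constant on the gap $\Rightarrow$ contradiction" argument: turning the local identity $\mathcal W_i^{\vec\lambda}\equiv\gamma_i$ on a gap into a global contradiction requires the analytic continuation of the potentials across $\R$ and a careful use of the total-mass constraints — this is exactly the technical heart, and I would handle it by invoking the standard balayage/analytic-continuation machinery (as in \cite[Section 4]{ALLMF}, \cite{BKMW}, \cite{HK}) rather than redoing it from scratch.
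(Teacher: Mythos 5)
You should first be aware that the paper contains no proof of Lemma~\ref{varios}: it is imported verbatim from \cite[Lemma 4.2]{ALLMF}, so there is no internal argument to measure your sketch against. Judged on its own terms, your plan handles the easy parts correctly — the continuity of $\mathcal P^{\lambda_2}$ is in fact already observed in the paper right after the definition of $\mathcal M(\sigma)$, your continuity-principle argument for $\mathcal P^{\lambda_1}$ is standard, and the estimate $\mathcal{W}_1^{\vec\lambda}(x)\ge(\varphi(x)-4\log x)+\log x+O(1)$ giving compactness of $\supp(\lambda_1)$ under \eqref{condvar*} is essentially right. But the connectedness assertions, which are the substance of the lemma, rest on a step that is false as stated.

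The false step is: ``a harmonic function on an interval that is $\le\gamma_2$ at both endpoints and $\ge\gamma_2$ throughout must be constant.'' A function harmonic in a planar neighborhood of a segment satisfies no maximum principle along that segment (consider $y^2-x^2$ restricted to $y=0$). Worse, even granting it, there is nothing to contradict: condition $(C)(ii)$ of Proposition~\ref{prop1} explicitly allows $\mathcal{W}_2^{\vec\lambda}>\gamma_2$ on a gap of $\supp(\lambda_2)$, so the configuration you describe is perfectly consistent with extremality, and the promised ``contradiction with $|\lambda_2|=1$'' is never produced. What actually drives these proofs is the one-dimensional structure of the potentials on intervals disjoint from the supports: $\mathcal P^{\mu}$ is convex there, and $x\,(\mathcal P^{\mu})'(x)$ is monotone when $\supp(\mu)$ lies on one side of the origin. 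This is exactly where the hypothesis ``$x\varphi'(x)$ increasing'' enters — it makes $x\,\frac{d}{dx}\mathcal{W}_1^{\vec\lambda}(x)$ increasing across a gap $(a,b)\subset\R_+\setminus\supp(\lambda_1)$, so $\mathcal{W}_1^{\vec\lambda}$ is decreasing-then-increasing there, hence $\le\gamma_1$; combined with the reverse inequality off the support this forces $\mathcal{W}_1^{\vec\lambda}\equiv\gamma_1$ on the gap and from there a degeneracy of $\lambda_1$ incompatible with finite energy. Your sketch never uses this hypothesis at all. Similarly, $0\in\supp(\lambda_2)$ follows not from a mass-transport perturbation but from the explicit sign computation that $\mathcal{W}_2^{\vec\lambda}$ is strictly decreasing between the right endpoint of $\supp(\lambda_2)$ and $0$ (since $(\mathcal P^{\lambda_2})'<0$ and $(\mathcal P^{\lambda_1})'>0$ there), contradicting $\mathcal{W}_2^{\vec\lambda}\ge\gamma_2$ on that interval; an analogous monotonicity argument gives $\supp(\lambda_2)=\R_-$. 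Since you ultimately defer ``the technical heart'' to the very reference from which the lemma is quoted, the proposal does not yet constitute a proof.
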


Following \cite{HK} we introduce a modified logarithmic energy of a measure $\mu$ as follows
\[   I^*(\mu) := \iint  \log \frac{\sqrt{1+|x|^2}\sqrt{1+|y|^2}}{| x- y|} d\mu(x) d\mu (y).
\]
Analogously, the mutual energy of $\mu,\nu$ is given by
\[ I^*(\mu,\nu) := \iint  \log \frac{\sqrt{1+|x|^2}\sqrt{1+|y|^2}}{| x- y|} d\mu(x) d\nu (y).
\]
The  advantage of this definition comes from the fact that (see (2.10)-(2.11) in \cite{HK}),
\[ \frac{| x- y|}{\sqrt{1+|x|^2}\sqrt{1+|y|^2}} \leq 1, \quad x,y \in \C.
\]
Therefore, the kernel in the previous integrals is uniformly bounded from below.

Let us introduce
\[ \mathcal{M}^*(\sigma) = \{(\mu_1,\mu_2)^t \in \mathcal{M}(2) \times \mathcal{M}(1): \supp(\mu_1) \subset \R_+,\,  \supp(\mu_2) \subset \R_-, \, \mu_2 \leq \sigma\},
\]
where $\mathcal{M}(c)$ denotes the class of all positive Borel measures with total mass $c >0$. Observe that unlike in the definition of $\mathcal{M}(\sigma)$ we neither assume \eqref{C1}  nor the finiteness of the logarithmic energy of the measures. However, if $\mu, \nu$ verify \eqref{C1} then
\[ I^*(\mu,\nu) = I(\mu,\nu) + \frac{1}{2}|\nu| \int \log(1+|x|^2) d\mu(x) + \frac{1}{2}|\mu| \int \log(1+|x|^2) d\nu(x).
\]
Having this in mind, we introduce the following functional on $\mathcal{M}^*(\sigma)$:
\begin{equation}
\label{def:modified}
J_{\varphi}^*(\vec{\mu}) = 2(I^*(\mu_1) - I^*(\mu_1,\mu_2) + I^*(\mu_2)) + \int (2\varphi - 3 \log(1+|x|^2)) d\mu_1,
\end{equation}
 assuming that $\varphi$ satisfies
\begin{equation} \label{condvar}
\liminf_{x \to \infty} \,(\varphi(x) - 3\log x ) > - \infty,
\end{equation}
so that $J_{\varphi}^*(\vec{\mu}) > -\infty$ for all $\vec{\mu} \in \mathcal{M}^*(\sigma)$. Moreover, $\inf \{J_{\varphi}^*(\vec{\mu}): \vec{\mu} \in \mathcal{M}(\sigma)\} > - \infty$. It is understood that
$J_{\varphi}^*(\vec{\mu}) = +\infty$ when  $I^*(\mu_1) = +\infty$ or $I^*(\mu_2) = +\infty$. Assumption \eqref{condvar} ensures the weak admissibility of the extremal problem. Straightforward calculations yield that $J_{\varphi}^*(\vec{\mu}) = J_{\varphi}(\vec{\mu})$, $\vec{\mu} \in \mathcal{M}(\sigma)$.

The following lemma is a direct consequence of  \cite[Corollary 2.7]{HK}:
\begin{lemma} \label{existe} Assume that $\varphi$ verifies \eqref{condvar}, then $J_{\varphi}^*(\cdot)$ is strictly convex on the set where it is finite and admits a unique minimizer. If the components of the minimizer verify \eqref{C1} then it minimizes $J_{\varphi}(\cdot)$ as well.
\end{lemma}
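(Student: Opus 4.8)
The plan is to recognize $J_\varphi^*$ as a vector energy functional in the abstract setting of \cite{HK} and to deduce the statement from \cite[Corollary 2.7]{HK} after checking its hypotheses. Comparing with \eqref{def:modified}, the functional $J_\varphi^*$ is precisely the one built from the interaction matrix $A=\begin{pmatrix}2&-1\\-1&2\end{pmatrix}$, the modified logarithmic kernel $\log\bigl(\sqrt{1+|x|^2}\,\sqrt{1+|y|^2}/|x-y|\bigr)$, the scalar external field $2\varphi-3\log(1+|x|^2)$ acting on the first component, and the admissible class $\mathcal M^*(\sigma)$ encoding the mass normalizations $|\mu_1|=2$, $|\mu_2|=1$, the support restrictions, and the upper constraint $\mu_2\le\sigma$. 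Since the modified kernel is bounded from below on $\C\times\C$, each $I^*(\mu_i)$ is well defined in $(-\infty,+\infty]$, so $J_\varphi^*$ takes values in $(-\infty,+\infty]$.

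Next I would verify the weak admissibility of the problem (Assumption 2.1 of \cite{HK}). As $|x|\to\infty$ one has $3\log(1+|x|^2)=6\log|x|+o(1)$, hence $2\varphi(x)-3\log(1+|x|^2)=2\bigl(\varphi(x)-3\log x\bigr)+o(1)$; therefore \eqref{condvar}, namely $\liminf_{x\to\infty}(\varphi(x)-3\log x)>-\infty$, together with the standing continuity and lower boundedness of $\varphi$, is exactly the growth hypothesis under which \cite{HK} establishes the lower semicontinuity of $J_\varphi^*$ on $\mathcal M^*(\sigma)$ with respect to weak-$*$ convergence, and hence the existence of a minimizer (this is the point already summarized in the paragraph preceding the statement).

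For strict convexity I would use the positive definiteness of $A$ (eigenvalues $1$ and $3$). Given distinct $\vec\mu,\vec\nu$ in the set where $J_\varphi^*$ is finite, put $\vec\mu_t=(1-t)\vec\mu+t\vec\nu$ and $\rho_i=\mu_i-\nu_i$, each of zero total mass; the external-field part of $J_\varphi^*(\vec\mu_t)$ is affine in $t$, while the quadratic part has second difference $-t(1-t)\sum_{i,j}A_{ij}I^*(\rho_i,\rho_j)$. Expanding $I^*$ in terms of the ordinary logarithmic energy, the form $\vec\rho\mapsto\sum_{i,j}A_{ij}I(\rho_i,\rho_j)$ is strictly positive on nonzero vectors of finite-energy signed measures of zero mass — the standard consequence of the positivity of the logarithmic kernel (cf.\ \eqref{desig}) together with $A>0$, e.g.\ via the potential/Fourier representation of $I$ — so $J_\varphi^*(\tfrac12\vec\mu+\tfrac12\vec\nu)<\tfrac12 J_\varphi^*(\vec\mu)+\tfrac12 J_\varphi^*(\vec\nu)$; strict convexity then forces uniqueness of the minimizer. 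All of this is packaged in \cite[Corollary 2.7]{HK}, which I would simply invoke.

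Finally, for the last assertion: if the components of the minimizer $\vec\lambda$ of $J_\varphi^*$ over $\mathcal M^*(\sigma)$ satisfy \eqref{C1}, then $\vec\lambda\in\mathcal M(\sigma)\subset\mathcal M^*(\sigma)$, and using $J_\varphi^*\equiv J_\varphi$ on $\mathcal M(\sigma)$ we obtain
\[
J_\varphi(\vec\lambda)=J_\varphi^*(\vec\lambda)=\inf_{\mathcal M^*(\sigma)}J_\varphi^*\le\inf_{\mathcal M(\sigma)}J_\varphi^*=\inf_{\mathcal M(\sigma)}J_\varphi\le J_\varphi(\vec\lambda),
\]
so every inequality is an equality and $\vec\lambda$ minimizes $J_\varphi$ over $\mathcal M(\sigma)$. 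The only genuinely delicate step is the second one: checking that our constrained class $\mathcal M^*(\sigma)$ and our external field — which is unbounded and whose second component vanishes identically — do satisfy Assumption 2.1 of \cite{HK}, so that its Corollary 2.7 applies verbatim; the rest (strict convexity from $A>0$, existence from lower semicontinuity, uniqueness from strict convexity, and the $J_\varphi^*=J_\varphi$ comparison) is bookkeeping.
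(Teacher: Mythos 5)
Your proposal is correct and follows essentially the same route as the paper, which proves the lemma simply by declaring it a direct consequence of \cite[Corollary 2.7]{HK} after noting (in the preceding paragraph) that \eqref{condvar} guarantees weak admissibility. The details you supply — identifying the interaction matrix and the effective external fields $2\varphi-3\log(1+|x|^2)$ and $0$, deriving strict convexity from the positive definiteness of $A$ together with \eqref{desig}, and closing the chain $J_\varphi(\vec{\lambda})=J_\varphi^*(\vec{\lambda})=\inf_{\mathcal M^*(\sigma)}J_\varphi^*\le\inf_{\mathcal M(\sigma)}J_\varphi^*=\inf_{\mathcal M(\sigma)}J_\varphi\le J_\varphi(\vec{\lambda})$ for the last assertion — are exactly what the paper leaves implicit.
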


\smallskip

Summarizing we have that if $\varphi$ satisfies \eqref{condvar*} then $J_{\varphi} > -\infty$ and there exists $\vec{\lambda} \in \mathcal{M}(\sigma)$ such that $J_{\varphi}(\vec{\lambda}) = J_{\varphi}$ which allows us to use Proposition \ref{prop1}.

\medskip

Let us return to the polynomials $Q_n$ satisfying \eqref{eq:1a}. Fix $n \in \N$. We have $(w_{1,n}dx,w_{2,n}dx) = \mathcal{N}(w_{1,n}dx,\sigma_{2,n})$, where
\[
\sigma_{2,n} = \frac{4}{\pi}\sum_{k\geq 0} \delta_{-[(2k+1)/(2n)]^2} .
\]
Using the properties of Nikishin systems (see \cite{FL} and \cite{GRS}) it is easy to deduce that there exists a monic polynomial $Q_{n,2}$, $\deg Q_{n,2} = n,$ whose zeros are simple and contained in the convex hull of $\supp(\sigma_{2,n})$,
such that
\begin{equation} \label{eq:3}
\int  x^{\nu} \frac{Q_{n}(x)}{Q_{n,2}(x)}\frac{dx}{\sinh(\pi n \sqrt{x})}   =0 , \qquad \nu =0,\ldots, 2n -1,
\end{equation}
and
\begin{equation} \label{eq:4}
\int  t^{\nu} \frac{Q_{n,2}(t)}{Q_{n}(t)}\int  \frac{Q_{n}^2(x)}{Q_{n,2}(x)} \frac{d x}{(x-t)\sinh(\pi n \sqrt{x})}d \sigma_{2,n}(t)  =0 , \qquad \nu =0,\ldots,  n  -1.
\end{equation}
That is, $Q_n$ and $Q_{n,2}$ satisfy full orthogonality relations with respect to certain varying measures. From \eqref{eq:3}-\eqref{eq:4} one can establish a connection (see \cite[Section 3.3]{ALLMF} between the asymptotic zero distribution of the sequences of polynomials $\{Q_n\}_{n \geq 0}, \{Q_{n,2}\}_{n\geq 0}$ and the solution of a vector equilibrium problem of the type discussed above in which
\[ \varphi(x) = \pi\sqrt{x}, \qquad d\sigma(x) = \frac{dx}{2\sqrt{|x|}}.
\]
Obviously, $\varphi(x)$ and $x\varphi'(x)$ are increasing on $\R_+$ and \eqref{condvar*} takes place. An explicit solution for the corresponding equilibrium problema is given in \cite[Proposition 3.1]{ALLMF}.

In fact, in \cite{Sorokin:2009fk} V.N. Sorokin proved for this very interesting case the following result which we will use.

\begin{proposition} \label{teo:2}
Let
$$
p_- =- \left( \frac{\sqrt{5}-1}{2}\right)^5\approx -0.09, \quad p_+=\left( \frac{2}{\sqrt{5}-1}\right)^5 \approx 11.09.
$$
There exists a unique pair of measures  $\lambda_1$ and $\lambda_2$, which satisfy the following equilibrium conditions:
\begin{itemize}
\item
$\supp (\lambda_1) =[0,p_+]\subset \overline{\R}_+$,  $|\lambda_1|=2$, and $\supp (\lambda_2) =\R_-$, $|\lambda_2|=1$.
\item $\lambda_2$ is absolutely continuous, and
\begin{equation} \label{constraint}	
\lambda_2'(x) \begin{cases}
= 1/(2\sqrt{|x|}), & x\in [p_-,0], \\
< 1/(2\sqrt{|x|}), & x<p_-.
\end{cases}
\end{equation}
In other words, with
\begin{equation} \label{def_measure_sigma}	
d\sigma(x)=\frac{dx}{2\sqrt{|x|}}=\frac{i \, dx}{2 x_+^{1/2}}, \quad x\in \overline{\R}_-,
\end{equation}
the measure $\sigma-\lambda_2$ is non-negative and supported on $(-\infty, p_-]$.
\item With the external field
\begin{equation} \label{external_field}	
\varphi(x)=\pi \sqrt{x}>0 \text{ on } \R_+,
\end{equation}
there exists a unique constant $\omega\in \R$ such that
\begin{align} \label{equilibrium1}	
2 \mathcal{P}^{\lambda_1}(x) - \mathcal{P}^{\lambda_2}(x)+\varphi(x) &
\begin{cases}
= \omega, & x\in [0,p_+], \\
> \omega, & x>p_+;
\end{cases} \\
\label{equilibrium2}	
2 \mathcal{P}^{\lambda_2}(x) - \mathcal{P}^{\lambda_1}(x) &
\begin{cases}
= 0, &   x\leq p_-,\\
< 0, & x\in (p_-,0).
\end{cases}
\end{align}
\end{itemize}
Moreover.
\begin{equation} \label{vaguelimit} \lim_{n\to \infty} \mu_{Q_n} = \lambda_1/2, \qquad \lim_{n\to \infty} \mu_{Q_{n,2}} = \lambda_2.
\end{equation}
\begin{equation} \label{eq:15*}
\lim_{n\to \infty} \left(\int  \frac{|Q_n (x)|^2}{|Q_{n,2} (x)|}\frac{dx}{\sinh(n \pi \sqrt{x})} \right)^{1/n} = e^{-\omega}.
\end{equation}
and
\begin{equation} \label{eq:23*}
\lim_{n\to \infty} \left(\int   \frac{Q_{n,2}^2(t)}{|Q_n(t)|}\int  \frac{Q_n^2(x)}{|Q_{n,2}(x)|} \frac{d x}{|x-t|\sinh( \pi n \sqrt{x})}d \sigma_{2,n}(t)\right)^{1/n}  = e^{-\omega}.
\end{equation}
\end{proposition}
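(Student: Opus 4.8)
The plan is to separate the potential-theoretic content of Proposition~\ref{teo:2} — existence and uniqueness of $(\lambda_1,\lambda_2)$, the explicit support $[0,p_+]\cup\R_-$, the constraint \eqref{constraint} and the equilibrium identities \eqref{equilibrium1}--\eqref{equilibrium2} — from the asymptotic content \eqref{vaguelimit}--\eqref{eq:23*}, treating the former by combining the abstract theory assembled above with an explicit algebro-geometric construction and the latter by the standard correspondence between full varying orthogonality and vector equilibrium problems; the whole scheme follows \cite{Sorokin:2009fk} and \cite[Sec.~3.3 and Prop.~3.1]{ALLMF}. \emph{First}, I would check that the data $\varphi(x)=\pi\sqrt x$ and $d\sigma(x)=dx/(2\sqrt{|x|})$ on $\R_-$ satisfy the hypotheses of the earlier results: $\varphi$ is continuous and bounded below, both $\varphi$ and $x\varphi'(x)=\tfrac\pi2\sqrt x$ are increasing on $\R_+$, $\varphi(x)-4\log x\to+\infty$ so that \eqref{condvar*} (hence \eqref{condvar}) holds, and $\mathcal P^{\sigma|_K}$ is continuous for every compact $K\subset\R_-$. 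Then Lemma~\ref{existe} and Proposition~\ref{prop1} yield a unique extremal pair $\vec\lambda=(\lambda_1,\lambda_2)$, Lemma~\ref{varios} gives that $\mathcal P^{\lambda_1},\mathcal P^{\lambda_2}$ are continuous on $\C$, $\supp(\lambda_1)$ is compact and connected with $0\in\supp(\lambda_1)$, $\supp(\lambda_2)=\R_-$ is connected with $0\in\supp(\lambda_2)$, and $\lambda_1,\lambda_2$ obey \eqref{C1}; moreover $\vec\lambda$ satisfies Proposition~\ref{prop1}(C) with unique constants $\gamma_1,\gamma_2$. Letting $x\to-\infty$ in (C)(ii) and using $\mathcal P^{\lambda_j}(x)=-|\lambda_j|\log|x|+o(1)$ forces $\gamma_2=0$; what remains is to pin down $\supp(\lambda_1)=[0,p_+]$, to show the constraint is active precisely on $[p_-,0]$, and to set $\omega:=\gamma_1$.

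\emph{Second}, I would carry out the explicit construction. Writing $\widehat\lambda_j(z)=\int(z-t)^{-1}\,d\lambda_j(t)$ (so that $g_1'=\widehat\lambda_1\sim 2/z$ and $\widehat\lambda_2\sim 1/z$ at $\infty$) and noting the fortunate identity $\varphi'(z)=\widehat\sigma(z)=\pi/(2z^{1/2})$, I would differentiate (C)(i) across $(0,p_+)$ and (C)(ii) across the part of $\R_-$ on which the constraint is inactive; the resulting boundary relations among $\widehat\lambda_1$, $\widehat\lambda_2$, $\varphi'$ say that an appropriate combination of these functions continues to a single algebraic function of degree~$3$ over $z$ whose only branch points lie in $\{0,p_-,p_+,\infty\}$, and a Riemann--Hurwitz count gives genus~$0$. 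Uniformizing by $\zeta$ through $z=R(\zeta):=\dfrac{1+\zeta}{\zeta^2(1-\zeta)}$ is then natural: $R'(\zeta)=0$ exactly when $\zeta^2+\zeta-1=0$, and the critical values are precisely $p_+$ (at $\zeta=(\sqrt5-1)/2$) and $p_-$ (at $\zeta=-(\sqrt5+1)/2$), which is how the numbers $p_\pm$ of the statement arise. Every branch is rational in $\zeta$; fixing the three branches $\zeta_1,\zeta_2,\zeta_3$ by $\zeta_1\to1$, $\zeta_{2,3}\to\pm z^{-1/2}$ at $\infty$ and imposing $|\lambda_1|=2$, $|\lambda_2|=1$, the correct analyticity domains, the $\R_-$-constraint and the prescribed $\varphi'$-behaviour on the first sheet determines the algebraic function uniquely; integrating it recovers $\widehat\lambda_1,\widehat\lambda_2$, and then $g_1(z)=\int\log(z-t)\,d\lambda_1(t)$ and $\omega$.

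\emph{Third}, one verifies that these explicit $\lambda_1,\lambda_2$ are genuine positive measures and satisfy all of \eqref{constraint}--\eqref{equilibrium2}: $\lambda_1'\ge 0$ on $(0,p_+)$, $\lambda_2'\ge0$ on $\R_-$, $\lambda_2'=1/(2\sqrt{|x|})$ on $[p_-,0]$ with strict inequality on $(-\infty,p_-)$ (equivalently $\sigma-\lambda_2\ge0$ and supported on $(-\infty,p_-]$), and the strict inequalities $>\omega$ in \eqref{equilibrium1} for $x>p_+$ and $<0$ in \eqref{equilibrium2} for $x\in(p_-,0)$. I expect this to be the main obstacle: the equalities in \eqref{equilibrium1}--\eqref{equilibrium2} are built in, but the inequalities and positivity require a careful sign analysis of the explicit branches. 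The working device is that each of $[0,p_+]$, $[p_-,0]$, $(-\infty,p_-]$, $[p_+,\infty)$ is the $R$-image of an explicit arc in the $\zeta$-plane with endpoints among $-1,0,1,(\sqrt5-1)/2,-(\sqrt5+1)/2$, along which the signs of the rational branches can be read off and monotonicity yields the strict inequalities. Once this is done, the explicit pair satisfies Proposition~\ref{prop1}(C), so by the uniqueness clause there it coincides with the extremal $\vec\lambda$, which simultaneously delivers $\supp(\lambda_1)=[0,p_+]$, the constraint structure, the identities \eqref{equilibrium1}--\eqref{equilibrium2} and the uniqueness of $\omega$, and shows that the conditions listed in the statement characterize $(\lambda_1,\lambda_2)$ uniquely.

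\emph{Finally}, for \eqref{vaguelimit}--\eqref{eq:23*} I would use the standard link between the \emph{full} varying orthogonality relations \eqref{eq:3}--\eqref{eq:4} for $(Q_n,Q_{n,2})$ and the above equilibrium problem, as developed for Nikishin systems with a discrete second generator in \cite{ALLMF} and, for this precise weight, in \cite{Sorokin:2009fk}. The zero counting measures $\mu_{Q_n}$ and $\mu_{Q_{n,2}}$ are tight — the zeros of $Q_n$ lie in $\R_+$ and cannot escape to $+\infty$ because $\varphi$ grows, while those of $Q_{n,2}$ lie in $\conv\supp(\sigma_{2,n})\subset\R_-$ and cannot carry mass off to $-\infty$ thanks to the admissibility condition \eqref{condvar} — hence weak-$*$ precompact; combining the orthogonality with the classical two-sided estimates for the weighted norms appearing in \eqref{eq:15*}--\eqref{eq:23*} shows that every weak-$*$ limit point of $(\mu_{Q_n},\mu_{Q_{n,2}})$ satisfies the equilibrium conditions (C), hence equals $(\lambda_1/2,\lambda_2)$ by the uniqueness just established, so the full sequences converge and \eqref{vaguelimit} holds. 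Substituting these limits into the logarithms of the norms and collapsing them with the equilibrium equalities then yields \eqref{eq:15*} and \eqref{eq:23*}.
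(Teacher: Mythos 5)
The paper does not prove Proposition~\ref{teo:2} itself: it quotes the result from \cite{Sorokin:2009fk}, with the existence/uniqueness framework supplied by Proposition~\ref{prop1} and Lemmas~\ref{varios} and~\ref{existe}, the explicit solution deferred to \cite[Proposition 3.1]{ALLMF}, and the link to the zero asymptotics via the full varying orthogonality \eqref{eq:3}--\eqref{eq:4} deferred to \cite[Section 3.3]{ALLMF}; your outline follows exactly that route (abstract variational characterization, genus-zero uniformization $z=(1+\zeta)/(\zeta^2(1-\zeta))$ whose finite critical values are precisely $p_\pm$, verification of the variational conditions, and the standard Gonchar--Rakhmanov-type argument for \eqref{vaguelimit}--\eqref{eq:23*}). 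The plan is sound and the specific computations you do spell out (the critical points $\zeta^2+\zeta-1=0$ of the rational map, the identity $\varphi'(z)=\pi/(2z^{1/2})=\int (z-t)^{-1}d\sigma(t)$ that makes the problem explicitly solvable, and $\gamma_2=0$ from the behaviour at $-\infty$) are correct, so this matches the intended proof.
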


\medskip

Using the pair of equilibrium measures $\lambda_j$ described in Proposition~\ref{teo:2} we define as usual the \emph{$g$-functions}
\begin{equation} \label{def_g}	
g_j(z)=\int \log (z-t) \, d\lambda_j(t), \quad j=1,2.
\end{equation}
In this definition we understand by $\log (z-\cdot)$ its principal branch in $\C\setminus (-\infty, p_+]$. We summarize next some of their properties needed for out steepest descent analysis.

For the sake of brevity we use the notation
\begin{equation}
\label{defUpsilon}
\upsilon =\upsilon(z):=\exp\left(\pi z^{1/2}\right), \quad z\in \C\setminus \R_-,
\end{equation}
so that $|\upsilon|>1$ in $\C\setminus \R_-$, $\upsilon_+\upsilon_-=1$ on $\R_-$, and
$$
w_{1,n}(z)= \frac{2}{\upsilon^n-\upsilon^{-n}} , \quad w_{2,n}(z)= \frac{2}{z^{1/2}  \left(\upsilon^n+\upsilon^{-n}\right)} .
$$
We have also the following straightforward identities:
\begin{equation} \label{relationsW}	
\begin{split}
w_{1,n}(z) \pm z^{1/2} w_{2,n}(z)& =\frac{4 \upsilon^{\pm n}}{\upsilon^{2n}-\upsilon^{-2n}} 
, \\
\frac{1}{w_{1,n}(z)} \pm \frac{1}{z^{1/2} w_{2,n}(z)} & = \pm \upsilon^{\pm n} , \quad z\in \C\setminus \R_-,
\end{split}
\end{equation}
as well as
\begin{equation} \label{bdryforw}	
w_{j,n+}(x)=- w_{j,n-}(x), \quad x<0, \quad j=1, 2.
\end{equation}

\begin{proposition}
\label{prop:summaryG}
The $g$-functions defined in \eqref{def_g} satisfy the following properties:
\begin{enumerate}
\item[(i)]  $\exp \left( g_{1+} +  g_{1-} - g_2 + \omega \right)(x) \begin{cases}
  = \upsilon , & \text{on $[0,p_+]$,} \\
  < \upsilon, & \text{for $x>p_+$.} \end{cases}$ 
  \item[(ii)]  $\exp \left( g_{2+} +  g_{2-} - g_1 \right)(x) \begin{cases}
  = 1 , & \text{for $x<p_-$,} \\
  >1, & \text{on $(p_-,0]$.} \end{cases}$
   \item[(iii)]  $\exp \left( g_{2+} - g_{2-}  \right)(x) = \upsilon_+^2$ on $[p_-,0]$.

  \item[(iv)] For $x\in [0,p_+]$,
\begin{equation} \label{onsupp1}	
g_{1+}(x)-g_{1-}(x)=2\pi i \int_x^{p_+} d\lambda_1(t).
\end{equation}
 \item[(v)] With $z=x+i y$,
$$
\frac{\partial}{\partial y} \Re \left(2 g_2(z)-g_1(z)- 2\pi z^{1/2} \right)\bigg|_{z=x+i0}= 2\pi \left( \lambda_2'(x)-\frac{1}{2\sqrt{|x|}}\right), \quad x<0.
$$
In particular, this derivative is $\leq 0$ on $\R_-$, and $<0$ for $x<p_-$.
\item[(vi)] There is an open sector with its vertex at $p_-$ and containing $(-\infty, p_-)$ where
\begin{equation}
\label{bound1}
\left| e^{2g_2-g_1}(z) \upsilon^{-2}\right|<1.
\end{equation}
Moreover, there exists an $\varepsilon>0$ such that for $|\Im z|<\varepsilon$, $p_-+\varepsilon < \Re z < -\varepsilon$,
\begin{equation}
\label{bound2}
\left| e^{2g_2-g_1}(z) \upsilon^{-2}\right|>1
\end{equation}
(the relations on $\R_-$ hold in the sense of the boundary values of the left hand sides).
\end{enumerate}

\end{proposition}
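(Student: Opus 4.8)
The plan is to verify each of the six items directly from the defining properties of $\lambda_1,\lambda_2,\omega$ in Proposition~\ref{teo:2} together with the explicit definition \eqref{def_g} of the $g$-functions and \eqref{defUpsilon}. The unifying observation is that $\exp(2\mathcal{P}^{\lambda_1}-\mathcal{P}^{\lambda_2})$, etc., are exactly the exponentials of the equilibrium combinations in \eqref{equilibrium1}--\eqref{equilibrium2}, but written in terms of the $g$-functions rather than the potentials; one then only has to track the branch-cut contributions. Concretely, for $x$ real I would use the identity $\log(z-t)=\mathcal{P}^{\delta_t}(z)^{-1}$-type bookkeeping, namely that for $t<x$ one has $\Re\log(x-t)=\log|x-t|$ with no imaginary part, while for $t>x$ (the portion of $\supp(\lambda_1)$ to the right of $x$) one picks up a term $+\pi i$ from the principal branch, so that $g_{1\pm}(x)$ differ precisely by $2\pi i\int_x^{p_+}d\lambda_1$ — this is item (iv), and it should be proved first since it is the computational backbone for (i) and (ii).

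For item (i): on $[0,p_+]$, adding the two boundary values kills the imaginary parts coming from $\lambda_1$, giving $\Re(g_{1+}+g_{1-})(x) = 2\int\log|x-t|d\lambda_1(t) = -2\mathcal{P}^{\lambda_1}(x)$, while $\Re g_2(x)=-\mathcal{P}^{\lambda_2}(x)$ for $x\geq p_-$ (and in particular on $[0,p_+]$, since $\supp(\lambda_2)=\R_-$), and one must also check the imaginary parts cancel so that $g_{1+}+g_{1-}-g_2+\omega$ is real there. Then \eqref{equilibrium1} gives $\exp(g_{1+}+g_{1-}-g_2+\omega)(x)=\exp(-2\mathcal{P}^{\lambda_1}+\mathcal{P}^{\lambda_2})(x)=\exp(\varphi(x)-\omega+\ldots)$ — more carefully, $g_{1+}+g_{1-}-g_2+\omega = -(2\mathcal{P}^{\lambda_1}-\mathcal{P}^{\lambda_2}) + \omega = \varphi(x) = \pi x^{1/2}$ on $[0,p_+]$ by \eqref{equilibrium1}, hence the exponential equals $\upsilon$; for $x>p_+$ the equilibrium inequality $2\mathcal{P}^{\lambda_1}-\mathcal{P}^{\lambda_2}+\varphi>\omega$ translates into $g_{1+}+g_{1-}-g_2+\omega<\varphi$, i.e. the exponential is $<\upsilon$. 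Item (ii) is the mirror image using \eqref{equilibrium2}: on $x<0$, $\Re(g_{2+}+g_{2-})(x)=-2\mathcal{P}^{\lambda_2}(x)$ and $\Re g_1(x) = -\mathcal{P}^{\lambda_1}(x)$, and one checks imaginary parts vanish, so $g_{2+}+g_{2-}-g_1 = -(2\mathcal{P}^{\lambda_2}-\mathcal{P}^{\lambda_1})$, which by \eqref{equilibrium2} is $=0$ for $x<p_-$ and $>0$ on $(p_-,0]$. Item (iii) is immediate: on $[p_-,0]$, $g_{2+}-g_{2-} = 2\pi i\int_x^{0}d\lambda_2(t)$, and on $[p_-,0]$ the constraint \eqref{constraint} is active, $\lambda_2'=1/(2\sqrt{|t|})$, so $\int_x^0 d\lambda_2(t)=\int_x^0 dt/(2\sqrt{-t}) = \sqrt{-x} = x_+^{1/2}/i$ (with the branch convention), giving $g_{2+}-g_{2-}=2\pi x_+^{1/2}$ and $\exp$ of that is $\upsilon_+^2$.

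For item (v), differentiate under the integral sign: $\frac{\partial}{\partial y}\Re g_j(x+i0)$ for $x<0$ equals $\pi$ times the density of $\lambda_j$ continued across — more precisely, the Cauchy–Riemann equations give $\frac{\partial}{\partial y}\Re g_2(x+iy) = -\frac{\partial}{\partial x}\Im g_2(x+iy)$, and $\Im g_{2+}(x) = \pi\lambda_2((x,0))\cdot(\text{const})$, so the $x$-derivative produces $-\pi\lambda_2'(x)$ with appropriate sign; combining the $g_1,g_2$ and the $\pi z^{1/2}$ terms (whose $y$-derivative at $x+i0$ with $x<0$ contributes the $1/(2\sqrt{|x|})$ via $\frac{\partial}{\partial y}\Re(\pi(x+iy)^{1/2})$) yields exactly $2\pi(\lambda_2'(x)-1/(2\sqrt{|x|}))$. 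The sign statement is then Proposition~\ref{teo:2}'s \eqref{constraint}: $\leq 0$ on all of $\R_-$ and $<0$ strictly for $x<p_-$. Finally item (vi): on $\R_-$, $\left|e^{2g_2-g_1}\upsilon^{-2}\right| = \exp\Re(2g_2-g_1-2\pi z^{1/2})$, and at $x<p_-$ this equals $\exp(2\mathcal{P}^{\lambda_1}-\ldots)$ — using (ii)-type identities one gets that the exponent vanishes on $(-\infty,p_-]$ and, by the strict sign in (v), its normal derivative is strictly negative there, so moving off the axis into the upper/lower half-plane decreases it; by continuity this persists in an open sector with vertex $p_-$, giving \eqref{bound1}. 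For \eqref{bound2}, on the segment $(p_-,0)$ the boundary exponent is strictly positive (again by the sign analysis, since the constraint is active and (ii) gives the relevant inequality going the other way just to the right of $p_-$ — here one must be careful: actually on $(p_-,0)$ use that $2\mathcal{P}^{\lambda_2}-\mathcal{P}^{\lambda_1}<0$ from \eqref{equilibrium2}, translating to the exponent being $>0$), so by continuity the inequality $>1$ survives in a thin horizontal strip $\{|\Im z|<\varepsilon,\ p_-+\varepsilon<\Re z<-\varepsilon\}$.

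The main obstacle I anticipate is the careful bookkeeping of branch choices — principal branch of $\log(z-t)$ versus principal branch of $z^{1/2}$ in $\C\setminus\R_-$, the orientation convention for $\pm$ boundary values, and the relation $\upsilon_+\upsilon_- = 1$ on $\R_-$ — so that the equilibrium identities \eqref{equilibrium1}--\eqref{equilibrium2}, which are stated for the real potentials $\mathcal{P}^{\lambda_j}$, get correctly matched (sign, factor of $2\pi i$, which side) to the $g$-function combinations; a single sign error propagates through (i), (ii), (iii) and then corrupts (vi). A secondary subtlety is that (v) and (vi) require not just the equalities on $\R_-$ but the strict monotonicity transverse to the axis, which genuinely uses the strict part of the constraint inequality \eqref{constraint} for $x<p_-$ and a Harnack/continuity argument to upgrade a pointwise strict inequality on a line to an open-region inequality.
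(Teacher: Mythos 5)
Your proposal is correct and follows essentially the same route as the paper, which derives (i)--(iii) directly from \eqref{equilibrium1}, \eqref{equilibrium2} and \eqref{constraint}, (iv) from the definition of $g_1$, (v) from the Cauchy--Riemann equations, and (vi) from (v) together with \eqref{equilibrium2}; your version simply fills in the branch-cut bookkeeping that the paper leaves implicit. The only cosmetic caveat is that the imaginary parts in (i) and (ii) do not literally vanish (e.g.\ $\Im g_{1\pm}=\pm 2\pi$ on $\R_-$) but are integer multiples of $2\pi$, so the exponentials are unaffected — which is exactly the sense in which the identities are stated.
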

\begin{proof}
All these identities are direct  consequence of Proposition~\ref{teo:2}. Indeed, $(i)$, $(ii)$ and $(iii)$ follow directly from \eqref{equilibrium1}, \eqref{equilibrium2} and  \eqref{constraint}, respectively. For $(iv)$ we use the definition of $g_1$. In order to prove $(v)$ we use the equilibrium conditions and the Cauchy--Riemann formulas. Inequality \eqref{bound1} is a consequence of $(v)$, while \eqref{bound2} follows from \eqref{equilibrium2}.

\end{proof}

We introduce finally two other auxiliary functions. From the analyticity of the density of $\lambda_1$ it follows that the right hand side in \eqref{onsupp1} can be extended as a multivalued analytic function to a neighborhood $\mathcal U$ of the interval $[0,p_+]$. Hence, we can define the holomorphic branch
\begin{equation} \label{def:mappingPsi2}	
\psi(z)=-2\pi i \int_z^{p_+} d\lambda_1(t), \quad z\in \mathcal U\setminus (-\infty, p_+].
\end{equation}
By \eqref{onsupp1},
\begin{equation} \label{bdryvalues}	
\psi_\pm(x) = \mp \left( g_{1+}(x)-g_{1-}\right)(x), \quad x\in (0,p_+) \quad \text{and} \quad \psi_\pm(0) =\pm 4 \pi i.
\end{equation}
Since
$$
\frac{d}{dx} \psi(x)= 2\pi  i \lambda_1'(x) \in i\R_+ ,
$$
it implies that
\begin{equation}
\label{normalDer1}
 \frac{\partial}{\partial y} \Re \psi(x+i y)\bigg|_{y=+0}= -2\pi   \lambda_1'(x)< 0, \quad x\in (0,p_+).
\end{equation}

Furthermore, function
$$
\exp \left(2 g_{1 }   - g_2 + \omega + \psi \right)(z) / \upsilon
$$
is holomorphic $B_\delta\setminus (0, p_+)$, where $B_\delta= \{ z: |z-p_+|<\delta\}$, with $\delta<p_+/2$. Considering its boundary values on $B_\delta\cap  (0, p_+)$ and using $(i)$ of Proposition~\ref{prop:summaryG} we conclude that
\begin{equation} \label{identityPsi1}	
\exp \left(2 g_{1 }   - g_2 + \omega  \right)(z)=  \upsilon \exp \left(- \psi(z) \right), \quad z \in B_\delta\setminus (0, p_+).
\end{equation}
Observe that this identity has a holomorphic continuation to $\C\setminus (-\infty, p_+]$, so we can actually use it to extend the definition of $\exp(\psi)$ there:
 \begin{equation} \label{identityPsi1New}	
e^{ \psi(z) } = \upsilon \exp \left(-2 g_{1 }   + g_2 - \omega  \right)(z)    , \quad z \in \C\setminus (-\infty, p_+].
\end{equation}
With this definition, and taking into account \eqref{bdryvalues}, we conclude that
\begin{equation}
\label{psiOnNegative}
e^{ \psi_+(x) } = e^{ \psi_-(x) }, \quad x\in (p_-, 0).
\end{equation}

We can apply analogous arguments when defining
\begin{equation} \label{def:mappingPsi1}	
\widehat \psi(z)=-2\pi i \int_z^{p_-} d\left(\sigma- \lambda_2\right)(t)
\end{equation}
in a neighborhood of $p_-$, cut along $(-\infty, p_-]$.

\section{Non-linear steepest descent analysis} \label{sec:RH}

The starting point of the steepest descent asymptotic analysis is the RHP (RH-Y1)--(RH-Y3) for the matrix $\bm Y$.

\subsection{Global lens opening}

Using the notation \eqref{defUpsilon} we define the following matrix-valued functions in $\C\setminus \R_-$:
$$
\bm A_L(z):= \begin{pmatrix}
1 & -   1/(2 z^{1/2})   \\
 z^{1/2}   & 1/2
\end{pmatrix}  ,
\quad
\bm A_R(z) =    \begin{pmatrix}
1 & -\frac{1}{ z^{1/2} \upsilon^n w_1} \\
 z^{1/2}   & \frac{1}{z^{1/2}  \upsilon^n w_2}
\end{pmatrix} .
$$
We have that
\begin{equation} \label{asymptA}	
\bm A_R(z) =\bm A_L(z)\bm B(z),  \quad \text{with} \quad  \bm B(z):=\begin{pmatrix}
1 &   \frac{1}{2 z^{1/2}    \upsilon^{ 2n}}   \\
   & 1
\end{pmatrix}.
\end{equation}
In particular, $\det \bm A_L(z)=\det \bm A_R(z) \equiv 1$, and 
\begin{equation} \label{asymptAbis}	
\bm A_R(z) =\bm A_L(z) \left(\bm I +\mathcal O\left( |\upsilon|^{-2n}\right)\right), \quad
z\to \infty ,  \quad z\in \C\setminus \R_-. 
\end{equation}
Moreover, by \eqref{relationsW},
\begin{equation} \label{transf1}	
(w_1, w_2) \bm A_R(x)= \left(\frac{4 \upsilon^n}{  \upsilon^{2n}-\upsilon^{-2n} }    , 0\right), \quad x>0,
\end{equation}
and for $x<0$,
\begin{equation} \label{transf2}	
\bm A_{L-}^{-1}(x)\bm A_{L+}(x)= \begin{pmatrix}
0 & - 1/(2x^{1/2}_+) \\
  2  x^{ 1/2}_+  & 0
\end{pmatrix}, \quad \bm A_{R-}^{-1}(x)\bm A_{R+}(x)= \begin{pmatrix}
  \upsilon_+^{2n} & 0 \\
 2  x^{1/2}_+   &   \upsilon_+^{-2n}
\end{pmatrix}.
\end{equation}

 Now we open lenses as in the Figure \ref{fig:first_transformation}, and define the new matrix (written block-wise):
\begin{equation} \label{defX1}	
\bm X(z)= \bm Y(z) \left(\begin{MAT}{c.c}
  1 & \bm 0 \\.
 \bm  0 & \bm A_R(z) \\
\end{MAT} \right) \text{ in the domains limited by } \Delta^\pm  \text{ and } (p_-,+\infty),
\end{equation}
and
\begin{equation} \label{defX2}	
\bm X(z)= \bm Y(z) \left(\begin{MAT}{c.c}
  1 & \bm 0 \\.
 \bm  0 & \bm A_L(z)  \\
\end{MAT} \right)  \text{ in the domains limited by } \Delta^\pm  \text{ and } (-\infty, p_-).
\end{equation}
The newly defined matrix $\bm X$ is the unique solution  of the following RHP:
\begin{description}
\item{(RH-X1)} $\bm X = \bm X_n$ is holomorphic in $\C\setminus (\R\cup \Delta^+ \cup \Delta^-)$, has continuous boundary values on all contours, and these satisfy
$$
\bm X_+(z)=\bm X_-(z) \bm J_{\bm X}(z),
$$
with
\begin{align*}
\bm J_{\bm X}(z)& = \begin{pmatrix}
1 & \frac{4 \upsilon^n}{  \upsilon^{2n}-\upsilon^{-2n} }   & 0 \\
& 1 & \\
& & 1
\end{pmatrix} ,	 \quad x>0,
\\
\bm J_{\bm X}(z)& = \begin{pmatrix}
1 &  &   \\
&  \upsilon_+^{2n} &   \\
 & 2  x^{1/2}_+ & \upsilon_+^{-2n}
\end{pmatrix} ,	 \quad x\in (p_-,0),
\\
\bm J_{\bm X}(z)& = \left(\begin{MAT}{c.c}
  1 & \bm 0 \\.
 \bm  0 & \bm B^{\pm 1}(z) \\
\end{MAT} \right),	 \quad z\in \Delta^\pm,
\\
\bm J_{\bm X}(z)& = \begin{pmatrix}
1 &  &   \\
& 0 & -  1/(2x^{1/2}_+ )  \\
 & 2 x^{ 1/2}_+ & 0
\end{pmatrix} ,	 \quad x\in (-\infty, p_-).
\end{align*}
\item{(RH-X2)} $\bm X(z)=\left( \bm I + \mathcal O(z^{-1}) \right) \left(\begin{MAT}{c.c}
  1 & \bm 0 \\.
 \bm  0 & \bm A_L(z) \\
\end{MAT} \right) \diag \left(z^{2n}, z^{-n}, z^{-n} \right)$ as $z\to \infty$, $z\in \C\setminus \R $.

\item{(RH-X3)} $\bm X(z)=\mathcal O \left(\begin{array}{c|c|c}1 & |z|^{-1/2 } &  |z|^{-1/2 }   \end{array}\right)
   $ as $z\to 0$, and $\bm X(z)=\mathcal O \left(1\right)$ as $z\to p_-$.
\end{description}

\begin{figure}[t]
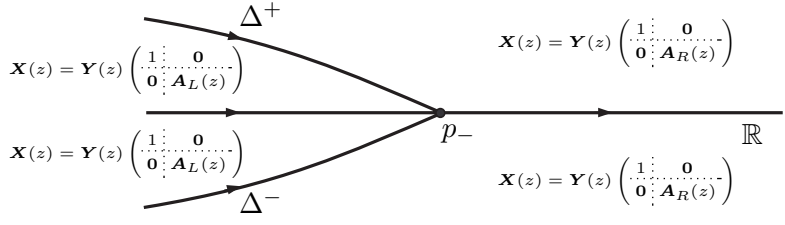

\centering \begin{overpic}[scale=1.2]%
{firstcontour}%
      \put(90,14){$\R $}
       \put(18,31){$\Delta^+ $}
       \put(18,4){$\Delta^- $}
       \put(47,15){$p_-$}
 \put(-15,24){\tiny $\bm X(z)= \bm Y(z) \left(\begin{MAT}{c.c}
  1 & \bm 0 \\.
 \bm  0 & \bm A_L(z) \\
\end{MAT} \right)$}
 \put(-15,12){\tiny $\bm X(z)= \bm Y(z) \left(\begin{MAT}{c.c}
  1 & \bm 0 \\.
 \bm  0 & \bm A_L(z) \\
\end{MAT} \right)$}
 \put(55,28){\tiny $\bm X(z)= \bm Y(z) \left(\begin{MAT}{c.c}
  1 & \bm 0 \\.
 \bm  0 & \bm A_R(z) \\
\end{MAT} \right)$}
 \put(55,8){\tiny $\bm X(z)= \bm Y(z) \left(\begin{MAT}{c.c}
  1 & \bm 0 \\.
 \bm  0 & \bm A_R(z) \\
\end{MAT} \right)$}

\end{overpic}
\caption{Global lens opening.}
\label{fig:first_transformation}
\end{figure}

 Indeed, jump relations (RH-X1) are obtained by direct calculations, while (RH-X2) is a consequence of the obvious identity
 $$
\diag \left(z^{2n}, z^{-n}, z^{-n} \right)  \left(\begin{MAT}{c.c}
  1 & \bm 0 \\.
 \bm  0 & \bm C \\
\end{MAT} \right) =\left(\begin{MAT}{c.c}
  1 & \bm 0 \\.
 \bm  0 & \bm C \\
\end{MAT} \right) \diag \left(z^{2n}, z^{-n}, z^{-n} \right),
 $$
valid  for any  $2\times 2$ matrix $\bm C$.

 Finally, (RH-X3) is a result of a direct combination of (RH-Y3) and of the fact that
 $$
 \bm A_R(z)=\mathcal O\begin{pmatrix}
 1 &  1 \\
  |z|^{ 1/2}  &  1
\end{pmatrix}
   \quad \text{as  } z\to 0.
 $$

\subsection{Second transformation}

Now we use the functions $g_j$ defined in \eqref{def_g} in order to normalize the behavior at infinity. Set
\begin{equation}
\label{defUU}
\bm U(z)= \diag\left( e^{n  \omega },   1 ,  1 \right) \bm X(z) \diag\left( e^{-n(g_1(z)+\omega)},   e^{n(g_1(z)-g_2(z))} ,  e^{n g_2(z)}\right) .
\end{equation}
Then $\bm U$ is analytic in $\C\setminus (\R\cup \Delta^+ \cup \Delta^-)$, and
$$
\bm U_+(z)=\bm U_-(z) \bm J_{\bm U}(z),
$$
with
\begin{align*}
\bm J_{\bm U}(z)& = \begin{pmatrix}
e^{-n(g_{1+} -  g_{1-} )(x)}  &   \frac{4 \upsilon^n}{  \upsilon^{2n}-\upsilon^{-2n} }
  e^{n(g_{1+} +  g_{1-} - g_2 + \omega)(x)}& 0 \\
& e^{n(g_{1+} -  g_{1-} )(x)}  & \\
& & 1
\end{pmatrix} ,	 \quad x>0,
\\
\bm J_{\bm U}(z)& = \begin{pmatrix}
1 &  &   \\
&  \upsilon_+^{2n} e^{-n(g_{2+}-g_{2-})} &   \\
 & 2 x^{ 1/2}_+ e^{-n(g_{2+}+g_{2-}-g_1)}  &  \upsilon_+^{-2n} e^{n(g_{2+}-g_{2-})}
\end{pmatrix} ,	 \quad x\in (p_-,0),
\\
\bm J_{\bm U}(z)& = \begin{pmatrix}
1 &  &   \\
& 1 &\pm \frac{1}{2 z^{1/2}    \upsilon^{ 2n}} e^{n(2 g_{2 } -g_1)}  \\
& & 1
\end{pmatrix},	 \quad z\in \Delta^\pm,
\\
\bm J_{\bm U}(z)& = \begin{pmatrix}
1 &  &   \\
& 0 & -  1/(2x^{-1/2}_+)  e^{n(g_{2+}+g_{2-}-g_1)}  \\
 & 2 x^{1/2}_+ e^{-n(g_{2+}+g_{2-}-g_1)} & 0
\end{pmatrix} ,	 \quad x\in (-\infty, p_-).
\end{align*}

Taking into account the properties of the $g$-functions summarized in Proposition~\ref{prop:summaryG} we have that:
\begin{itemize}
\item The   jump matrix $\bm J_{\bm U}$ on $[0,p_+]$ has the form
$$
\bm J_{\bm U}(z)  = \begin{pmatrix}
e^{n \psi_+(x)}  &  \frac{4 }{  1-\upsilon^{-4n} }
 & 0 \\
& e^{n \psi_-(x)}  & \\
& & 1
\end{pmatrix},
$$
while on $(p_+, +\infty)$,
$$
\bm J_{\bm U}(z)  = \begin{pmatrix}
1  &  \frac{4 \upsilon^n}{  \upsilon^{2n}-\upsilon^{-2n} }
  e^{n(g_{1+} +  g_{1-} - g_2 + \omega)(x)}& 0 \\
& 1  & \\
& & 1
\end{pmatrix}=\begin{pmatrix}
1  &  \frac{4 \upsilon^n}{  \upsilon^{2n}-\upsilon^{-2n} }
  e^{n(2 g_{1 }  - g_2 + \omega)(x)}& 0 \\
& 1  & \\
& & 1
\end{pmatrix},
$$
and the entry $(1,2)$ of the jump matrix $\bm J_{\bm U}$ is exponentially decaying for $x>p_+$.
\item The   jump matrix $\bm J_{\bm U}$ on $(p_-,0)$ has the form
$$
\bm J_{\bm U}(z)  = \begin{pmatrix}
1 &  &   \\
&1 &   \\
 & 2x_+^{1/2} e^{-n(g_{2+}+g_{2-}-g_1)}  & 1
\end{pmatrix} ,
$$
and the $(3,2)$ entry is exponentially decaying.
 \item We can choose the contours $\Delta^\pm$ in such a way that the entry $(2,3)$ of the jump matrix $\bm J_{\bm U}$   on $\Delta^\pm$ is also exponentially decaying.
\item The   jump matrix $\bm J_{\bm U}$ for $x<p_-$ has the form
$$
\bm J_{\bm U}(z)  = \begin{pmatrix}
1 &  &   \\
& 0 & -1/(2x_+^{1/2})   \\
 & 2x_+^{1/2}  & 0
\end{pmatrix}.
$$
\end{itemize}
In summary, $\bm J_{\bm U}$ is exponentially close to the identity matrix $\bm I$ on all contours, except on  $\supp(\lambda_1) \cup \supp(\sigma-\lambda_2)$. Furthermore,
$$
\bm U(z)=\left( \bm I + \mathcal O(z^{-1}) \right) \left(\begin{MAT}{c.c}
  1 & \bm 0 \\.
 \bm  0 & \bm A_L(z) \\
\end{MAT} \right)  \quad \text{as} \quad z\to \infty , \quad z\in \C\setminus \R .
$$
Clearly, $\bm U$ has the same behavior at $z=p_-$ and at the origin as $\bm X$, see (RH-X3).

\subsection{Third transformation}

\begin{figure}[t]
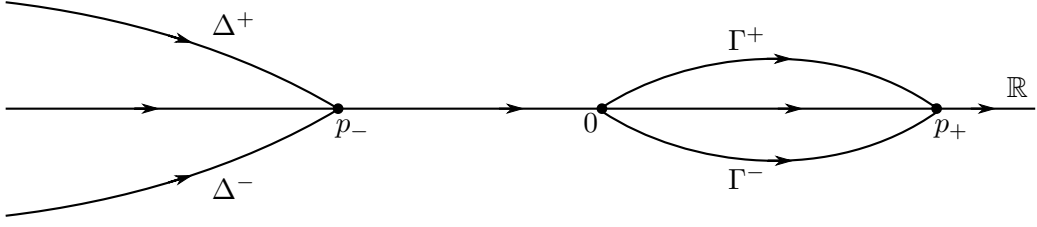

\centering \begin{overpic}[scale=1]%
{secondcontour_bis}%
      \put(97,12){$\R $}
      \put(90,8.5){$p_+ $}
       \put(20,18){$\Delta^+ $}
       \put(20,2){$\Delta^- $}
       \put(70,16.5){$\Gamma^+ $}
       \put(70,3){$\Gamma^- $}
       \put(32,8.5){$p_-$}
\put(56,8.5){$0$}
\end{overpic}
\caption{Contours for the third transformation.}
\label{fig:third_transformation}
\end{figure}

We fix the jump on $[0,p_+]$ observing that by \eqref{bdryvalues},
\begin{equation*}
\begin{split}
\begin{pmatrix}
1   &0 & 0\\
-\frac{1-\upsilon^{-4n}  }{4} e^{n \psi_-(x)}  & 1 & 0 \\
0& 0& 1
\end{pmatrix}  \begin{pmatrix}
e^{n \psi_+(x)}  &  \frac{4}{1-\upsilon^{-4n}  } & 0 \\
0& e^{n \psi_-(x)}   &0 \\
0&0 & 1
\end{pmatrix}\\
\times
\begin{pmatrix}
1   & 0&0 \\
-\frac{1-\upsilon^{-4n}  }{4} e^{n \psi_+(x)}  & 1 & 0 \\
0 & 0& 1
\end{pmatrix}
=\begin{pmatrix}
 0  &  \frac{4}{1-\upsilon^{-4n}  } & 0  \\
- \frac{1-\upsilon^{-4n}  }{4} &  0  & 0\\
0& 0& 1
\end{pmatrix}.
\end{split}
\end{equation*}

So, we open up the new lenses around $[0,p_+]$, as shown in Figure  \ref{fig:third_transformation}, and define the new matrix
\begin{equation} \label{defT}	
\bm T(z)=  \bm U(z) \begin{pmatrix}
1   & 0&0 \\
\mp \frac{1-\upsilon^{-4n}  }{4} e^{n \psi(z)}  & 1 & 0 \\
0 & 0& 1
\end{pmatrix} ,
\end{equation}
for $z$ in the domains bounded by $\Gamma^\pm $ and $[0,p_+]$ (we take ``$-$'' in \eqref{defT} for $\Im z>0$, and ``$+$'' otherwise), and $\bm T(z)=  \bm U(z) $ elsewhere. Hence, $\bm T$ is holomorphic in $\C\setminus (\R\cup \Gamma^\pm \cup \Delta^\pm)$,
and
$$
\bm T_+(z)=\bm T_-(z) \bm J_{\bm T}(z),
$$
with
\begin{align*}
\bm J_{\bm T}(z)& = \begin{pmatrix}
 0  &  \frac{4}{1-\upsilon^{-4n}  } & 0  \\
- \frac{1-\upsilon^{-4n}  }{4} &  0  & 0\\
0& 0& 1
\end{pmatrix},	 \quad x\in (0,p_+),
\\
\bm J_{\bm T}(z)& =
\begin{pmatrix}
1  &  \frac{4}{(1-\upsilon^{-4n})  \upsilon^{n}}
  e^{n(g_{1+} +  g_{1-} - g_2 + \omega)(x)}& 0 \\
& 1  & \\
& & 1
\end{pmatrix}, \quad x>p_+,
\\
\bm J_{\bm T}(z)& = \begin{pmatrix}
1 &  &   \\
&1 &   \\
 & 2 x_+^{1/2} e^{-n(g_{2+}+g_{2-}-g_1)}  & 1
\end{pmatrix} ,	 \quad x\in (p_-,0),
\\
\bm J_{\bm T}(z)& = \begin{pmatrix}
1 &  &   \\
& 1 &\pm \frac{1}{2  z^{1/2} \upsilon^{2n}}e^{n(2 g_{2 } -g_1)}  \\
& & 1
\end{pmatrix},	 \quad z\in \Delta^\pm,
\\
\bm J_{\bm T}(z)& = \begin{pmatrix}
1 &  &   \\
& 0 & - 1/(2x_+^{1/2}  )  \\
 &  2x_+^{1/2}  & 0
\end{pmatrix} ,	 \quad x\in (-\infty, p_-),
\\
\bm J_{\bm T}(z)& = \begin{pmatrix}
1   & 0&0 \\
 \frac{1-\upsilon^{-4n}  }{4} e^{n \psi(z)}  & 1 & 0 \\
& 0& 1
\end{pmatrix} ,	 \quad z\in \Gamma^\pm.
\end{align*}
By \eqref{normalDer1}, we can always choose $\Gamma^\pm$ in such a way that the $(2,1)$ entry of $\bm J_{\bm T}$ is exponentially decaying on $\Gamma^\pm$, away from their endpoints $0$ and $p_+$. Obviously, by definition,
$$
\bm T(z)=\left( \bm I + \mathcal O(z^{-1}) \right) \left(\begin{MAT}{c.c}
  1 & \bm 0 \\.
 \bm  0 & \bm A_L(z) \\
\end{MAT} \right)  \quad \text{as} \quad z\to \infty , \quad z\in \C\setminus \R .
$$

Since
$$
\frac{1-\upsilon^{-4n}  }{4} e^{n \psi(z)} =\mathcal O(|z|^{1/2}), \quad z \to 0,
$$
we get that $\bm T$ has the same asymptotic behavior at $z=p_-$ and at the origin as $\bm X$, see (RH-X3), when $z=0$ is approached both from inside and outside the contours $\Gamma^\pm $.

\subsection{Global parametrix}

Observing the jumps $\bm J_{\bm T}$ above, we can infer that an appropriate model for $\bm T$ is a matrix $\bm N$, holomorphic in $\C\setminus ((-\infty, p_-] \cup [0,p_+])$, such that
\begin{align} \label{RH-N1}
\bm N_+(x)& = \bm N_-(x) \begin{pmatrix}
 0  &  4 & 0  \\
- 1/4 &  0  & 0\\
0& 0& 1
\end{pmatrix},	 \quad x\in (0,p_+),
\\  \label{RH-N2}
\bm N_+(x)& = \bm N_-(x) \begin{pmatrix}
1 & 0 & 0  \\
0& 0 & -1/(2 x_+^{1/2})    \\
0 & 2 x_+^{1/2}  & 0
\end{pmatrix} ,	 \quad x\in (-\infty, p_-),
\end{align}
and
\begin{equation} \label{RH-N3}
\bm N(z)=\left( \bm I + \mathcal O(z^{-1}) \right) \left(\begin{MAT}{c.c}
  1 & \bm 0 \\.
 \bm  0 & \bm A_L(z) \\
\end{MAT} \right)   \quad \text{as} \quad z\to \infty , \quad z\in \C\setminus \R
\end{equation}
(matching asymptotically the behavior of $\bm T$). Observe that this behavior at infinity is consistent with the jump on $(-\infty, p_-)$, see \eqref{transf2}.

This RHP is solved using the Riemann surface $\mathcal R$ constructed gluing the three copies of $\C$, as shown in Figure \ref{fig:Riemann_sufrace}. This is a surface of genus 0.

\begin{figure}[h]
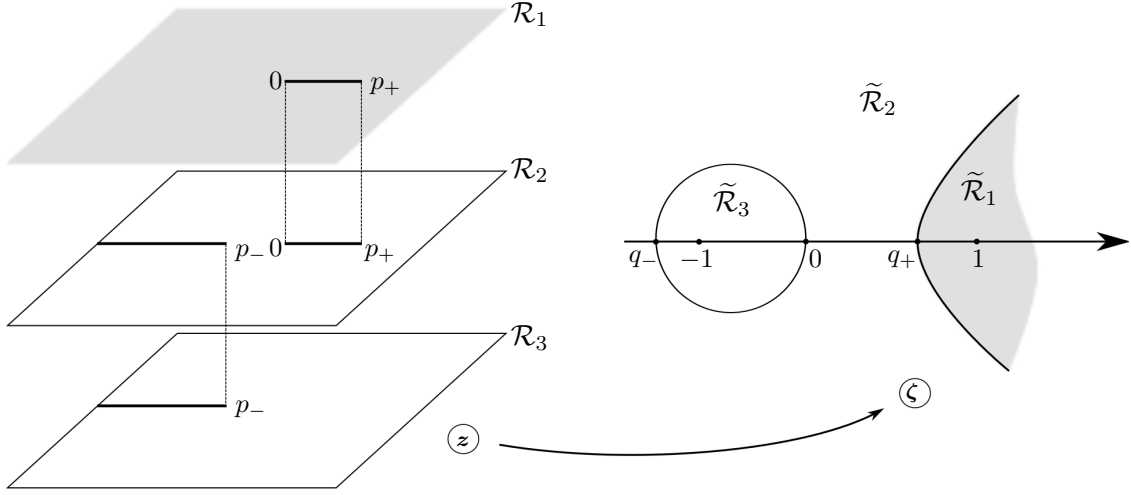

\centering \begin{overpic}[scale=0.9]%
{Riemann_sufrace}%
\put(44,43){ $ \mathcal R_1 $}
\put(44,29){ $ \mathcal R_2 $}
\put(44,14){ $ \mathcal R_3 $}
\put(31.5,37){ \small $p_+ $}
\put(31,22){ \small $p_+ $}
\put(22.5,37){ \small $0 $}
\put(22.5,22){ \small $0 $}
\put(19.5,22){ \small $p_- $}
\put(19.5,8){ \small $p_- $}
\put(84,27){ $ \widetilde{\mathcal R}_1 $}
\put(75,35){ $ \widetilde{\mathcal R}_2 $}
\put(62,26){ $ \widetilde{\mathcal R}_3 $}
\put(85,21){ \small $1$}
\put(54.5,21.5){ \small $q_-$}
\put(59,21){ \small $-1$}
\put(77.5,21.5){ \small $q_+$}
\put(70.5,21){ \small $0$}
\put(40,5){\scriptsize $\boldsymbol z $}
\put(80,9.3){\scriptsize $\boldsymbol \zeta $}
\end{overpic}
\caption{The Riemann surface $\mathcal R$ and the mapping \eqref{riemann_mapping}.}
\label{fig:Riemann_sufrace}
\end{figure}

It was observed in  \cite{Sorokin:2009fk} that
\begin{equation} \label{riemann_mapping}	
z = \frac{1+\zeta}{\zeta^2 (1-\zeta)}
\end{equation}
establishes a one-to-one bijection between  $\mathcal R$ and the extended $\zeta$-plane\footnote{Formally, we can say that $\mathcal R$ is parametrized by $(z, \zeta)$, where $z$ is the canonical projection on $\C$, and both variables are related by \eqref{riemann_mapping}. Then the bijection is $(z, \zeta) \leftrightarrow \zeta$.}. There are three inverse functions to \eqref{riemann_mapping}, which we choose
such that as $z\to\infty$,
\begin{align}
\zeta_{1} (z) & = 1- \frac{2}{z} -\frac{6}{z^2} 
+\mathcal{O} \left(\frac{1}{z^{3}} \right), \label{zeta1}\\
\zeta_{2} (z) & =\frac{1}{z^{1/2}}+\frac{1}{z} +
\frac{3}{2 z^{3/2}} + \frac{3}{ z^2} + \frac{55}{8 z^{5/2}}+\mathcal{O} \left(\frac{1}{z^{3}} \right), \label{zeta2}\\
\zeta_{3} (z) & = -\frac{1}{z^{1/2}}+\frac{1}{z} -
\frac{3}{2 z^{3/2}} + \frac{3}{ z^2} - \frac{55}{8 z^{5/2}}+\mathcal{O} \left(\frac{1}{z^{3}} \right).\label{zeta3}
\end{align}
Again, all fractional powers are taken
as principal branches, that is, positive on $\R_+$, with the
branch cut along $\R_-$.

Figure \ref{fig:Riemann_sufrace} also shows the domains
\[ \widetilde{\mathcal R}_j = \zeta_j(\mathcal R_j), \qquad j=1,2,3, \]
where $\mathcal R_j$ is the $j$th sheet of the Riemann surface, and
 the location of the points
\begin{equation} \label{qPlusMinus}	
 \zeta_2(0)=\infty,\quad  \zeta_2(\infty)=0, \quad q_\pm:=\zeta_2(p_\pm)=-\frac{1}{2} \pm \frac{\sqrt{5} }{2}
\end{equation}
in the $\zeta$-plane. We observe that
$\zeta_{2+}(-\infty, p_-)$ and $\zeta_{2+}(0,p_+)$ are in the lower half plane,
while $\zeta_{2-}(-\infty, p_-)$ and $\zeta_{2-}(0,p_+)$ are in the upper half plane.

Let us define the following functions:
\begin{equation} \label{def_ff12}	
r_1(\zeta)=\frac{(2\zeta)^{1/2}}{\left( 1+\zeta\right)^{1/2}}, \quad r_2(\zeta)=\frac{1}{4}\, r_1(\zeta), \quad \text{for } \zeta \in \widetilde{\mathcal R}_1 \cup \widetilde{\mathcal R}_2,
\end{equation}
as well as
\begin{equation} \label{def_ff3}
r_3(\zeta)=
\begin{cases}
- \frac{1}{(2\zeta)^{1/2}\left( 1-\zeta\right)^{1/2}}, \quad \zeta \in \widetilde{\mathcal R}_3 \cup (\Im \zeta >0), \\
 \frac{1}{(2\zeta)^{1/2}\left( 1-\zeta\right)^{1/2}}, \quad \zeta \in \widetilde{\mathcal R}_3 \cup (\Im \zeta <0).
\end{cases}
 \quad  \text{for } \zeta \in \widetilde{\mathcal R}_3 .
\end{equation}
In these formulas we use the main branch of all square roots: the branch of $\left( 1+\zeta\right)^{1/2}$ in $\C\setminus (-\infty, -1]$ takes the value $1$ at $\zeta=0$;  the branch of  $\left( 1-\zeta\right)^{1/2}$ is fixed in $\C\setminus [1, +\infty)$ by its value $1$ at $\zeta=0$,  and $\zeta^{1/2}$ is holomorphic in $\C\setminus (-\infty, 0]$ and positive on the positive semi axis.

With this convention, each  function $r_j$ is holomorphic in its corresponding domain $\widetilde{\mathcal R}_j$, $j=1,2,3$.

Now we define
$$
f_j(z)=r_j(\zeta_j(z)), \quad j=1, 2,3;
$$
in a slight abuse of notation, we identify here $z$ on the Riemann surface $\mathcal R$, and its projection on the complex plane.
In this fashion, $f_1$ is holomorphic in $\C\setminus [0,p_+]$, $f_2$ is holomorphic in $\C\setminus ((-\infty, p_-]\cup  [0,p_+])$, and $f_3$ is holomorphic in $\C\setminus (-\infty, p_-]$. Observe also that by \eqref{zeta1}--\eqref{zeta3}, as $z\to\infty$,
\begin{equation} \label{asymptoticsFjatinfinity}	
f_1(z)= 1+ \mathcal{O} \left(\frac{1}{z} \right), \quad f_2(z)= \frac{z^{-1/4}}{2^{3/2}}\left( 1+  \mathcal{O} \left(\frac{1}{z^{1/2}} \right)\right), \quad f_3(z)= \frac{i}{\sqrt{2}} \, z^{1/4}\left( 1+  \mathcal{O} \left(\frac{1}{z^{1/2}} \right)\right).
\end{equation}
For $z \in (0, p_+)$ we have
$$
f_{1\pm}(z)  = r_1(\zeta_{1\pm}(z)) =r_1(\zeta_{2\mp}(z)) = 4 r_2(\zeta_{2\mp}(z)) = 4 f_{2\mp}(z) . 
$$
On the other hand, let $z\in (-\infty, p_-)$; since $\zeta_{2+}(-\infty, p_-)$ is in the lower half plane, we have
\begin{align*}	
f_{3-}(z)  & = r_3(\zeta_{3-}(z)) =r_3(\zeta_{2+}(z)) =   \frac{1}{(2\zeta_{2+}(z))^{1/2}\left( 1-\zeta_{2+}(z)\right)^{1/2}} \\
& =   2 r_2(\zeta_{2+}(z)) \frac{\left( 1+\zeta_{2+}(z)\right)^{1/2}}{ \zeta_{2+}(z)\left( 1-\zeta_{2+}(z)\right)^{1/2}} .
\end{align*}	
By  \eqref{riemann_mapping} and \eqref{zeta2},  for $z\in \C\setminus (-\infty,p_+]$,
$$
\frac{\left( 1+\zeta \right)^{1/2}}{\zeta \left( 1-\zeta \right)^{1/2}} =z^{1/2}, \quad \zeta=\zeta_{2}(z).
$$
Hence,
$$
f_{3-}(z) =  2  (z^{1/2})_+ f_{2+}(z) , \quad z \in (-\infty, p_-).
$$
Analogously,
\begin{align*}	
f_{3+}(z)  & = r_3(\zeta_{3+}(z)) =r_3(\zeta_{2-}(z)) =  -\frac{1}{(2\zeta_{2-}(z))^{1/2}\left( 1-\zeta_{2-}(z)\right)^{1/2}} \\
& =  -2 r_2(\zeta_{2-}(z)) \frac{\left( 1+\zeta_{2-}(z)\right)^{1/2}}{ \zeta_{2-}(z)\left( 1-\zeta_{2-}(z)\right)^{1/2}} = 2  (z^{1/2})_+ f_{2-}(z) , \quad z \in (-\infty, p_-).
\end{align*}	

Gathering all these formulas we conclude that if we define
$$
\widehat{\bm N}(z) = \bm N(z)\, \diag(f_1(z), f_2(z), f_3(z)) ,\quad z \in \C\setminus ((-\infty, p_-]\cup  [0,p_+]),
$$
then we will obtain the following  RH problem for $\widehat{\bm N}$: it is holomorphic in $\C\setminus ((-\infty, p_-] \cup [0,p_+])$,
\begin{align}
\label{Ntilde1}
\widehat{\bm N}_+(z)& = \widehat{\bm N}_-(z) \begin{pmatrix}
 0  &  1 & 0  \\
- 1  &  0  & 0\\
0& 0& 1
\end{pmatrix},	 \quad z\in (0,p_+),
\\
\label{Ntilde2}
\widehat{\bm N}_+(z)& = \widehat{\bm N}_-(z) \begin{pmatrix}
1 & 0 & 0  \\
0& 0 & -1  \\
0 & 1  & 0
\end{pmatrix} ,	 \quad z\in (-\infty, p_-),
\end{align}
and
\begin{align*}
\widehat{\bm N}(z) & = 2^{-3/2}
\left( \bm I + \mathcal O\left(\frac{1}{z}\right) \right)  \diag(1 , z^{-1/4}, z^{1/4}) \begin{pmatrix}
1 & 0 & 0  \\
0& 1 & -i   \\
0 &1 & i
\end{pmatrix} \\
&\times \diag(1+ \mathcal{O}(z^{-1}),1+ \mathcal{O}(z^{-1/2}),1+ \mathcal{O}( z^{-1/2}))  \\
& = 2^{-3/2} \left( \bm I + \mathcal O(z^{-1/2}) \right) \diag(1 , z^{-1/4}, z^{1/4}) \begin{pmatrix}
1 & 0 & 0  \\
0& 1 &-i   \\
0 & 1  & i
\end{pmatrix}, \quad z\to \infty , \quad z\in \C\setminus \R .
\end{align*}

In order to solve this RH problem we use the polynomial $D(\zeta)$
\begin{equation}
\label{defD}
D(\zeta) = \zeta (\zeta-q_+)(\zeta-q_-)=\zeta (\zeta^2+\zeta-1)
\end{equation}
(see \eqref{qPlusMinus}). The square root $D (\zeta)^{1/2}$, which branches at $0$ and $q_\pm$, is defined with a cut on
$\zeta_{2-} (-\infty, p_-)\cup\zeta_{2-}(0,p_+)$, which, as noted before,
are the parts of the boundary of $\widetilde{\mathcal R}_2$ that are
in the upper half of the $\zeta$-plane.
We assume that $D (\zeta)^{1/2}>0 $ for $\zeta>q_+$, so that $D (\zeta)^{1/2}<0$ in $(q_-, 0)$ and $D (\zeta)^{1/2}\in i \R_-$ for $z\in ( 0, q_+)$.

We construct the matrix $\widehat{\bm N}(z)$ as follows
\begin{equation} \label{constructionN}
\widehat{\bm N}(z) =
 \begin{pmatrix}
F_1(\zeta_1(z)) & F_1(\zeta_2(z)) & F_1(\zeta_3(z)) \\
F_2(\zeta_1(z)) & F_2(\zeta_2(z)) & F_2(\zeta_3(z)) \\
F_3(\zeta_1(z)) & F_3(\zeta_2(z)) & F_3(\zeta_3(z))
\end{pmatrix},
\end{equation}
where
\[ F_1(\zeta) = K_1\frac{\zeta^2}{D(\zeta)^{1/2}}, \qquad F_2(\zeta) = K_2\frac{\zeta(\zeta-1)}{D(\zeta)^{1/2}}, \qquad F_3(\zeta) = K_3\frac{(\zeta-1)(\zeta - \zeta^*)}{D(\zeta)^{1/2}},
\]
with $D(\zeta)$ given by \eqref{defD}, and $K_1, K_2, K_3, \zeta^*$ are constants to be computed.

Because of the branch cut for $D(\zeta)^{1/2}$ the functions $F_j$, $j=1,2,3,$ defined above satisfy
$$
F_{j+}(\zeta) = - F_{j-}(\zeta) , \quad \zeta\in \partial \widetilde{\mathcal R}_2 \cap (\Im \zeta>0), \quad \text{and} \quad F_{j+}(\zeta) =  F_{j-}(\zeta) , \quad \zeta\in \partial \widetilde{\mathcal R}_2 \cap (\Im \zeta<0).
$$
Consequently, on $(0, p_+)$, for $j=1,2,3$ we have
\begin{align*}
\widehat{\bm N}_{j1+}(z) & =F_j(\zeta_{1+}(z)) = - F_j(\zeta_{2-}(z)) = - \widehat{\bm N}_{j2-}(z), \\
\widehat{\bm N}_{j2+}(z) & =F_j(\zeta_{2+}(z)) =  F_j(\zeta_{1-}(z)) =  \widehat{\bm N}_{j1-}(z), \\
\widehat{\bm N}_{j3+}(z) & =F_j(\zeta_{3+}(z)) =  F_j(\zeta_{3-}(z)) =  \widehat{\bm N}_{j3-}(z),
\end{align*}
and on $(-\infty, p_-)$,
\begin{align*}
\widehat{\bm N}_{j1+}(z) & =F_j(\zeta_{1+}(z)) =  F_j(\zeta_{1-}(z)) =  \widehat{\bm N}_{j1-}(z), \\
\widehat{\bm N}_{j2+}(z) & =F_j(\zeta_{2+}(z)) =  F_j(\zeta_{3-}(z)) =  \widehat{\bm N}_{j3-}(z), \\
\widehat{\bm N}_{j3+}(z) & =F_j(\zeta_{3+}(z)) = - F_j(\zeta_{2-}(z)) = - \widehat{\bm N}_{j2-}(z).
\end{align*}

In other words,  $\widehat{\bm N}$ constructed by formula \eqref{constructionN} satisfies the jump conditions \eqref{Ntilde1}--\eqref{Ntilde2}. It remains to analyze the asymptotic behavior at infinity.
Notice that, by \eqref{defD}, $D(1)=1$, so that
\[ F_1(\zeta) =  K_1   + \mathcal{O}(\zeta -1)), \quad \zeta \to 1, \quad \mbox{and} \quad F_1(\zeta) = \mathcal{O}(\zeta^{3/2}), \quad \zeta \to 0.
\]
Taking $K_1 = 1$ and using \eqref{zeta1}--\eqref{zeta3} it follows that
\[ \widehat{\bm N}_{11}(z) = 1 + \mathcal{O}(z^{-1}), \quad z \to \infty, \quad \mbox{and} \quad \widehat{\bm N}_{1k}(z) = \mathcal{O} ({z^{-3/4}}),k=2,3, \quad z \to \infty.
\]
We also have that
\[ F_j(\zeta) = \mathcal{O}(\zeta -1), \quad \zeta \to 1, \quad j=2,3;
\]
consequently, from \eqref{zeta1}
\[ \widehat{\bm N}_{j1}(z) = \mathcal{O}(z^{-1}), \quad z \to \infty, \quad j=2,3.
\]

With our convention about the branch of $D(\zeta)^{1/2}$, we see that
$$
D(\zeta)^{1/2} = -i \zeta^{1/2}+ \mathcal{O}({\zeta^{3/2}}), \qquad \zeta \to 0,
$$
where the (main) branch cut of $\zeta^{1/2}$ goes along the arc $\zeta_{2-} (-\infty, p_-)$, which joins $q_-$ and $0$ in the upper half plane, and the ray $(-\infty, q_-]$.
Thus,
\[ F_2(\zeta) =  - i K_2  \,  \zeta^{1/2} + \mathcal{O}({\zeta^{3/2}}),  \qquad \zeta \to 0.
\]

Using \eqref{zeta2} and \eqref{zeta3}, it follows that
\[ \widehat{\bm N}_{22}(z) = -i K_2\, z^{-1/4}  + \mathcal{O}({z^{-3/4}}), \qquad z \to \infty.
\]
\[ \widehat{\bm N}_{23}(z) = - K_2\,  z^{-1/4}   + \mathcal{O}({z^{-3/4}}), \qquad z \to \infty.
\]
Choosing
$$
K_2=   2^{-3/2}i,
$$
we find that, as $z \to \infty$,
\[ \widehat{\bm N}_{22}(z) =   2^{-3/2} z^{-1/4}\left(1 +  \mathcal{O}({z^{-1/2}}) \right), \quad
\widehat{\bm N}_{23}(z) =  - 2^{-3/2} i z^{-1/4} \left(1 +  \mathcal{O}({z^{-1/2}}) \right), 
\]
as needed.

Analogously,
\[ F_3(\zeta) =  K_3 \left( i \zeta^*  \,  \zeta^{-1/2} - \frac{i(2+\zeta^*)}{2} \,  \zeta^{1/2} + \mathcal{O}({\zeta^{3/2}})\right), \qquad \zeta \to 0,
\]
and substituting $\zeta_2(z)$ and $\zeta_3(z)$ into $F_3$, we find that
\[ \widehat{\bm N}_{32}(z) = K_3 \left( i\zeta^* z^{1/4} -i(1+\zeta^*) z^{-1/4} + \mathcal{O}({z^{-3/4}})\right), \qquad z \to \infty.
\]
\[ \widehat{\bm N}_{33}(z) =  K_3 \left( - \zeta^* z^{1/4} - (1+\zeta^*) z^{-1/4} + \mathcal{O}({z^{-3/4}})\right), \qquad z \to \infty.
\]
Hence, choosing $\zeta^*=-1$ and   $K_3=  K_2=2^{-3/2}i $, we obtain
\[ \widehat{\bm N}_{32}(z) =  2^{-3/2} z^{1/4} \left(1 +  \mathcal{O}({z^{-1/2 }}) \right), \quad
\widehat{\bm N}_{33}(z) =    2^{-3/2} i z^{1/4} \left(1 +  \mathcal{O}({z^{-1/2 }}) \right), \quad z \to \infty.
\]

The matrix $\widehat{\bm N}(z)$ has the following behavior near the (finite) branch points
\begin{equation} \label{Np-}
\widehat{\bm N}(z) = \left(
\begin{array}{ccc}
1 & |z - p_-|^{-1/4} & |z - p_-|^{-1/4} \\
1 & |z - p_-|^{-1/4} & |z - p_-|^{-1/4} \\
1 & |z - p_-|^{-1/4} & |z - p_-|^{-1/4}
\end{array}
\right), \qquad z \to p_-,
\end{equation}
\begin{equation} \label{Np+}
\widehat{\bm N}(z) = \left(
\begin{array}{ccc}
|z - p_+|^{-1/4} & |z - p_+|^{-1/4} & 1\\
|z - p_+|^{-1/4} & |z - p_+|^{-1/4} & 1\\
|z - p_+|^{-1/4} & |z - p_+|^{-1/4} & 1
\end{array}
\right), \qquad z \to p_+,
\end{equation}
and
\begin{equation} \label{N0}
\widehat{\bm N}(z) = \left(
\begin{array}{ccc}
|z |^{-3/4} & |z |^{-1/4} & 1\\
|z |^{-3/4} & |z |^{-1/4} & 1\\
|z |^{-3/4} & |z |^{-1/4} & 1
\end{array}
\right), \qquad z \to 0.
\end{equation}

Indeed, for $j=1,2,3,$ we have that $F_j(\zeta) = \mathcal{O}((\zeta - q_-)^{-1/2})$, $\zeta \to q_-$. Now, $\zeta_2^{-1}(q_-) = \zeta_3^{-1}(q_-) = p_-$ is a first order finite branch point. On the other hand, $p_-$ is a regular point of $\zeta_1$ and the image by $\zeta_1$ of a sufficiently small neighborhood of $p_-$ remains bounded away from all the singularities of $F_j$, $j=1,2,3$. This gives \eqref{Np-}. Analogously, for $j=1,2,3,$ we have that $F_j(\zeta) = \mathcal{O}((\zeta - q_+)^{-1/2})$, $\zeta \to q_+$. Now, $\zeta_1^{-1}(q_+) = \zeta_2^{-1}(q_+) = p_+$ is a first order finite branch point. We also have that $p_+$ is a regular point of $\zeta_3$ and the image by $\zeta_3$ of a sufficiently small neighborhood of $p_+$ remains bounded away from all the singularities of $F_j$, $j=1,2,3$,  and \eqref{Np+} follows. Finally, $F_1(\zeta) = \mathcal{O}(\zeta^{3/2})$, $F_2(\zeta) = \mathcal{O}(\zeta^{1/2})$, $F_3(\zeta) = \mathcal{O}(\zeta^{-1/2})$, as $\zeta \to \infty$. Since $\zeta_1^{-1}(
 \infty) = \zeta_2^{-1}(\infty) = 0$ is a first order finite branch point, while $0$ is a regular point of $\zeta_3$, whose image of a small neighborhood of $0$ is away from the singularities of $F_j$, $j=1,2,3$, we obtain \eqref{N0}.

Notice that \eqref{Ntilde1}--\eqref{Ntilde2} imply that $\det \widehat{\bm N}(z)$ is analytic in $\mathbb{C} \setminus \{0,p_-,p_+\}$. This together with \eqref{Np-}--\eqref{N0} gives us that $\det \widehat{\bm N}(z)$ is a entire function. From the asymptotic behavior of  $\widehat{\bm N}(z)$ at $\infty$, it follows that $\lim_{z\to \infty} \det \widehat{\bm N}(z) = -i/2$; therefore
\[ \det \widehat{\bm N}(z) \equiv -i/2, \qquad z \in \mathbb{C}.
\]

We can summarize our findings as follows:
\begin{proposition}\label{prop:global}
A solution of the model Riemann-Hilbert problem \eqref{RH-N1}--\eqref{RH-N3} is given by
$$
\bm N(z)=\begin{pmatrix}
F_1(\zeta_1(z)) & F_1(\zeta_2(z)) & F_1(\zeta_3(z)) \\
F_2(\zeta_1(z)) & F_2(\zeta_2(z)) & F_2(\zeta_3(z)) \\
F_3(\zeta_1(z)) & F_3(\zeta_2(z)) & F_3(\zeta_3(z))
\end{pmatrix}\, \diag(1/ f_1(z), 1/  f_2(z), 1/ f_3(z)) ,
$$
where $f_j(z)=r_j(\zeta_j(z))$, $j=1, 2, 3$, functions $r_j$ are given in \eqref{def_ff12}--\eqref{def_ff3}, and
\[
F_1(\zeta) =  \frac{\zeta^{3/2}}{(\zeta^2+\zeta-1)^{1/2}}, \quad F_2(\zeta) =  \frac{2^{-3/2}i \, \zeta^{1/2}(\zeta-1)}{(\zeta^2+\zeta-1)^{1/2}}, \quad F_3(\zeta) = \frac{2^{-3/2}i \, (\zeta^2-1)}{\zeta^{1/2}(\zeta^2+\zeta-1)^{1/2}},
\]
with the branches chosen as specified above.
\end{proposition}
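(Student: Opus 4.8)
The plan is to obtain $\bm N$ by undoing the diagonal conjugation $\widehat{\bm N}(z)=\bm N(z)\,\diag(f_1(z),f_2(z),f_3(z))$ that was used to pass from \eqref{RH-N1}--\eqref{RH-N3} to \eqref{Ntilde1}--\eqref{Ntilde2}. Essentially all the work has already been done in the paragraphs preceding the statement: the explicit matrix \eqref{constructionN}, with the values $K_1=1$, $K_2=K_3=2^{-3/2}i$, $\zeta^*=-1$ forced by the normalization at infinity, was shown to be holomorphic off $(-\infty,p_-]\cup[0,p_+]$ apart from the mild singularities \eqref{Np-}--\eqref{N0}, to satisfy the jump relations \eqref{Ntilde1}--\eqref{Ntilde2}, to have the prescribed behavior at $\infty$ (verified entry by entry), and to satisfy $\det\widehat{\bm N}\equiv -i/2$. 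What remains is to transport these properties through the invertible factor $\diag(1/f_1,1/f_2,1/f_3)$.

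First I would record the analytic behavior of the $f_j$. Each $f_j=r_j\circ\zeta_j$ is holomorphic and non-vanishing on its natural domain ($\C\setminus[0,p_+]$ for $f_1$, $\C\setminus((-\infty,p_-]\cup[0,p_+])$ for $f_2$, and $\C\setminus(-\infty,p_-]$ for $f_3$): indeed $r_1$ and $r_2$ vanish only at $\zeta=0$, which lies in $\widetilde{\mathcal R}_2$ but not in $\widetilde{\mathcal R}_1$, while $r_3$ has no zeros at all and its only singularities $\zeta=0,1$ lie in $\widetilde{\mathcal R}_2$ and $\widetilde{\mathcal R}_1$. Hence $\bm N=\widehat{\bm N}\,\diag(1/f_1,1/f_2,1/f_3)$ is holomorphic in $\C\setminus((-\infty,p_-]\cup[0,p_+])$; near $0,p_-,p_+$ the $1/f_j$ are either bounded or vanish at a quarter-power rate, so $\bm N$ inherits from \eqref{Np-}--\eqref{N0} only integrable singularities there, as is wanted for the model problem.

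Next I would translate the jumps. On $(0,p_+)$, inserting $\widehat{\bm N}_+=\widehat{\bm N}_-\cdot(\text{jump of }\eqref{Ntilde1})$ into $\bm N_\pm=\widehat{\bm N}_\pm\,\diag(1/f_{1\pm},1/f_{2\pm},1/f_{3\pm})$ reduces the claim $\bm N_+=\bm N_-\cdot(\text{jump of }\eqref{RH-N1})$ to the assertion that the triple product of $\diag(f_{1-},f_{2-},f_{3-})$, the jump matrix of \eqref{Ntilde1}, and $\diag(1/f_{1+},1/f_{2+},1/f_{3+})$ equals the jump matrix of \eqref{RH-N1}; this holds by the identities $f_{1\pm}=4f_{2\mp}$ on $(0,p_+)$ and $f_{3+}=f_{3-}$ ($f_3$ being analytic across $(0,p_+)$), both established above. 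The same computation on $(-\infty,p_-)$, using $f_{1+}=f_{1-}$ (analyticity of $f_1$ there) together with $f_{3\mp}=2(z^{1/2})_+f_{2\pm}$, produces \eqref{RH-N2}. For the normalization at $\infty$ I would invert \eqref{asymptoticsFjatinfinity} to get $\diag(1/f_1,1/f_2,1/f_3)=(\bm I+\mathcal O(z^{-1/2}))\,\diag(1,\,2^{3/2}z^{1/4},\,-2^{1/2}i\,z^{-1/4})$ and multiply the asymptotic form of $\widehat{\bm N}$ on the right by this factor; a short computation then yields \eqref{RH-N3}.

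Since every step above is purely mechanical once $\widehat{\bm N}$ is available, the proposition itself presents no real obstacle. The genuine difficulty — already met in the preceding paragraphs — was the construction of $\widehat{\bm N}$: one exploits that the rational substitution \eqref{riemann_mapping} uniformizes the genus-zero surface $\mathcal R$, so that a candidate with the right monodromy can be sought with entries of the form $R(\zeta)/D(\zeta)^{1/2}$ for a polynomial $R$, and then the free parameters $K_1,K_2,K_3,\zeta^*$ must be pinned down so that all the normalizations at $z=\infty$ hold simultaneously while — and this is exactly what the computation $\det\widehat{\bm N}\equiv -i/2$ certifies — no spurious poles are created at $0$, $q_\pm$, or along the branch cuts.
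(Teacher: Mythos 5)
Your proposal is correct and follows essentially the same route as the paper: the paper's justification of this proposition \emph{is} the preceding construction of $\widehat{\bm N}$ (jumps \eqref{Ntilde1}--\eqref{Ntilde2}, normalization at infinity fixing $K_1=1$, $K_2=K_3=2^{-3/2}i$, $\zeta^*=-1$, and the local behavior \eqref{Np-}--\eqref{N0}), and the proposition merely undoes the invertible diagonal factor $\diag(f_1,f_2,f_3)$, exactly as you do. Your explicit reverse transport of the jumps via $f_{1\pm}=4f_{2\mp}$ and $f_{3\mp}=2(z^{1/2})_+f_{2\pm}$, and of the asymptotics via the inverted expansions \eqref{asymptoticsFjatinfinity}, matches the paper's (implicit) argument.
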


\subsection{Parametrices near  soft edges }

\subsubsection{Parametrix near $p_+$}

In the terminology of   \cite{MR2283089}, we deal here with a (soft) band/void edge.

Consider a small fixed disk, $B_\delta$, of radius $0<\delta<p_+/2$, and center at $p_+$ (see Figure~\ref{fig:localanalysisPplus}). We look for $\bm P$   holomorphic in $ B_\delta \setminus (\R\cup \Gamma^\pm  )$, such that $ \bm P_+(z)=\bm P_-(z) \bm J_{\bm T}(z)$, where, as we have seen,
\begin{align*}
\bm J_{\bm T}(z)& = \begin{pmatrix}
 0  &  \frac{4}{1-\upsilon^{-4n}  } & 0  \\
- \frac{1-\upsilon^{-4n}  }{4} &  0  & 0\\
0& 0& 1
\end{pmatrix},	 \quad x\in ( p_+-\delta, p_+),
\\
\bm J_{\bm T}(z)& =
\begin{pmatrix}
1  &  \frac{4}{(1-\upsilon^{-4n})  \upsilon^{n}}
  e^{n(2 g_{1 }   - g_2 + \omega)(x)}& 0 \\
& 1  & \\
& & 1
\end{pmatrix}, \quad  (p_+, p_++\delta),
\\
\bm J_{\bm T}(z)& = \begin{pmatrix}
1   & 0&0 \\
 \frac{1-\upsilon^{-4n}  }{4} e^{n \psi(z)}  & 1 & 0 \\
& 0& 1
\end{pmatrix} ,	 \quad z\in B_\delta\cap \Gamma^\pm,
\end{align*}
and $\bm P$ is bounded as $z\to p_+$, $z\in   \R \setminus \Gamma^\pm$.

\begin{figure}[t]
\centering \begin{overpic}[scale=1.2]%
{localanalysisPplus}%
      \put(50, 44){$p_+ $}
           \put(100, 44){$p_+  +\delta$}
                    \put(-28, 44){$p_+  -\delta$}
       \put(22, 75){$\Gamma^+ $}
       \put(23,15){$\Gamma^- $}
       \put(88,15){\small $\partial B_\delta $}
\end{overpic}
\caption{Local analysis at $p_+$.}
\label{fig:localanalysisPplus}
\end{figure}

Additionally, as $n\to \infty$, we need
$$
\bm P(z)=\bm N(z) \left( \bm I + \mathcal O(1/n) \right) \quad z\in \partial B_\delta \setminus (\R \cup \Gamma^\pm),
$$
where $\bm N$ is the matrix-valued function described in Proposition~\ref{prop:global}.

We follow the well-known scheme, and build $\bm P$ in the form
\begin{equation} \label{parametrixP}	
 \bm P(z) = \bm E(z)
 \bm    \Psi\left(n^{2/3}f (z)\right)\diag\left( \frac{2}{\left(1-\upsilon^{-4n}\right)^{1/2}} e^{-\frac{n}{2}\psi(z)},
    \frac{\left(1-\upsilon^{-4n}\right)^{1/2}}{2}    e^{\frac{n}{2}\psi(z)},1\right),
\end{equation}
where
\begin{equation} \label{eq:defE}
  \bm   E(z) =   \bm N(z)\,    \begin{pmatrix}
   \sqrt{\pi} & -\sqrt{\pi} & 0 \\
   -i \sqrt{\pi} & -i \sqrt{\pi} & 0 \\
   0 & 0 & 1
   \end{pmatrix} \,
    \begin{pmatrix}
        n^{1/6}f ^{1/4}(z) &   0 & 0 \\
        0 &   n^{-1/6}f ^{-1/4}(z) & 0 \\
        0 &   0 & 1
    \end{pmatrix},
\end{equation}
and
\begin{align} \label{fqdef}
    f(z) =
    \left[\frac{3}{4}\psi(z)\right]^{2/3}
\end{align}
is a biholomorphic (conformal) map of a neighborhood of $p_+$ onto a
neighborhood of the origin such that $f(z)$ is real and positive
for $z>p_+$; recall that $\psi$ was defined in \eqref{def:mappingPsi2}. We may deform the contours $\Gamma^\pm$ near $p_+$ in such a way that $f$ maps
$\Gamma^\pm \cap B_\delta $ to the rays with angles
$\frac{2\pi}{3}$ and $-\frac{2\pi}{3}$, respectively.

Matrix $\bm \Psi$ is build using the Airy functions, as described for instance in \cite[page 253]{MR2470930}.
We put
\begin{equation*}
    y_0(s)=\Ai(s), \quad  y_1(s) = \omega \Ai(\omega s),
    \quad y_2(s) = \omega^{ 2 } \Ai(\omega^{ 2 } s), \quad \omega=e^{2\pi i/3}\,,
\end{equation*}
where $\Ai$ is the usual Airy function. Define the $2\times 2$ matrix  $\bm   K$ by
\begin{align*}
\bm     K(s) & =
    \begin{pmatrix}
        y_0(s) &  -y_2(s)   \\
        y_0'(s) &  -y_2'(s)     \end{pmatrix},
    \quad \arg s \in (0, 2 \pi/3),
    \\
 \bm    K(s) & =
    \begin{pmatrix}
        -y_1(s) &  -y_2(s)   \\
        -y_1'(s) &  -y_2'(s)     \end{pmatrix},
     \quad  \arg s \in (2\pi/3,\pi),
     \\
 \bm    K(s) & =
    \begin{pmatrix}
        -y_2(s) &   y_1(s)   \\
        -y_2'(s) &  y_1'(s)
            \end{pmatrix},
    \quad   \arg s \in (-\pi,-2\pi/3),
    \\
  \bm   K(s) & =
    \begin{pmatrix}
        y_0(s) & y_1(s)   \\
        y_0'(s) & y_1'(s)
            \end{pmatrix},
    \quad   \arg s \in (-2\pi/3,0).
\end{align*}
Then we take the $3\times 3$ matrix $\bm   \Psi$ as
$$
 \bm   \Psi(s) = \left(\begin{MAT}{c.c}
  \bm K (s) & \bm 0 \\.
 \bm  0 & 1 \\
\end{MAT} \right).
$$
This construction uses also identity \eqref{identityPsi1}.

\subsubsection{Parametrix near $p_-$}

In the terminology of   \cite{MR2283089}, this is a (soft) band/saturated region edge.

Consider a small fixed disk, $B_\delta$, of radius $0<\delta<|p_-|/2$, and center at $p_-$ (see Figure~\ref{fig:localanalysisPminus}). We look for $\bm P$   holomorphic in $ B_\delta \setminus (\R\cup \Delta^\pm  )$, such that $ \bm P_+(z)=\bm P_-(z) \bm J_{\bm T}(z)$, where, as we have seen,
\begin{align*}
\bm J_{\bm T}(z)& = \begin{pmatrix}
1 &  &   \\
&1 &   \\
 & 2 x_+^{1/2} e^{-n(g_{2+}+g_{2-}-g_1)}  & 1
\end{pmatrix} ,	 \quad   x\in ( p_- , p_- +\delta),
\\
\bm J_{\bm T}(z)& = \begin{pmatrix}
1 &  &   \\
& 0 & - 1/(2x_+^{1/2}  )  \\
 &  2x_+^{1/2}  & 0
\end{pmatrix} ,	 \quad    (p_- -\delta, p_-),
\\
\bm J_{\bm T}(z)& = \begin{pmatrix}
1 &  &   \\
& 1 &\pm \frac{1}{2  z^{1/2} \upsilon^{2n}}e^{n(2 g_{2 } -g_1)}  \\
& & 1
\end{pmatrix},	  \quad z\in B_\delta\cap \Delta^\pm,
\end{align*}
and $\bm P$ is bounded as $z\to p_-$, $z\in   \R \setminus \Delta^\pm$.

\begin{figure}[t]
\centering \begin{overpic}[scale=1.2]%
{localanalysisPplus}%
      \put(50, 44){$p_- $}
           \put(100, 44){$p_-  +\delta$}
                    \put(-28, 44){$p_-  -\delta$}
       \put(22, 75){$\Delta^+ $}
       \put(23,15){$\Delta^- $}
       \put(88,15){\small $\partial B_\delta $}
\end{overpic}
\caption{Local analysis at $p_-$.}
\label{fig:localanalysisPminus}
\end{figure}

Additionally, as $n\to \infty$, we need
$$
\bm P(z)=\bm N(z) \left( \bm I + \mathcal O(1/n) \right) \quad z\in \partial B_\delta \setminus (\R \cup \Delta^\pm),
$$
where $\bm N$ is the matrix-valued function described in Proposition~\ref{prop:global}.

Let us define
$$
h(z)= \frac{e^{n(2g_2-g_1)(z)/2} }{ 2^{ 1/2} z^{ 1/4}\upsilon^{ n }}, \quad z \in   \C \setminus (-\infty, p_+],
$$
and
\begin{equation} \label{defMmatrix}	
\bm H(z)= \begin{cases}  \diag\left(1,   h(z), 1/ h(z) \right), & z \in B_\delta \cap \{\Im z < 0 \}, \\
\diag\left(1,  i h(z), -i / h(z) \right), & z \in B_\delta \cap \{\Im z > 0 \}.
\end{cases}
\end{equation}
Let also
$$
 \widetilde{\bm P}(z) = \bm P(z) \bm H(z), \quad z \in B_\delta \setminus \R.
$$
Then $ \widetilde{\bm P}_+(z)=\widetilde{\bm P}_-(z) \bm J_{\widetilde{\bm P}}(z)$, with
\begin{align*}
\bm J_{\widetilde{\bm P}}(z)& = \begin{pmatrix}
1 &  &   \\
&1  &   \\
 & -1  & 1
\end{pmatrix} ,	 \quad   x\in ( p_- , p_- +\delta),
\\
\bm J_{\widetilde{\bm P}}(z)& = \begin{pmatrix}
1 &  &   \\
& 0 & 1  \\
 & -1 & 0
\end{pmatrix} ,	 \quad    (p_- -\delta, p_-),
\\
\bm J_{\widetilde{\bm P}}(z)& = \begin{pmatrix}
1 &  &   \\
& 1 & - 1 \\
& & 1
\end{pmatrix},	  \quad z\in B_\delta\cap \Delta^\pm,
\end{align*}
and $\widetilde{\bm P}(z) = \mathcal O (1, z^{-1/4}, z^{1/4})$ as $z\to p_-$, $z\in   \R \setminus \Delta^\pm$.

Comparing it with the RH problem for $\bm    K$ above (see e.g.~\cite[p.~213]{MR2000g:47048}) we see that non-trivial jumps for $\widetilde{\bm P} $ coincide with those of $\sigma_1 \sigma_3 \bm K(z) \sigma_1 \sigma_3$, where
$$
  \sigma_1 \sigma_3 = \begin{pmatrix} 0 & -1 \\ 1 & 0
\end{pmatrix}
$$
(see also \cite[formula (5.22)]{MR2283089}).

Then, taking
$$
\bm \Psi(s) = \left(\begin{MAT}{c.c}
 1 & \bm 0 \\.
 \bm  0 &  \sigma_1 \sigma_3  \bm K (s) \sigma_1 \sigma_3  \\
\end{MAT} \right) ,
$$
as before, we conclude that
\begin{equation} \label{parametrixPhat}	
 \bm P(z) = \bm E(z)
 \bm    \Psi\left(n^{2/3}\widehat f (z)\right)   \bm H^{-1}(z),
\end{equation}
where
\begin{equation} \label{eq:defEhatPminus}
  \bm   E(z) =   \bm N(z)\,    \begin{pmatrix}
  1 & 0 & 0 \\
 0 &  \sqrt{\pi} & -\sqrt{\pi}   \\
0 &   -i \sqrt{\pi} & -i \sqrt{\pi}
 \end{pmatrix} \,
    \begin{pmatrix}
   1 & 0 & 0 \\
0 &        n^{1/6}\widehat  f ^{1/4}(z) &   0  \\
 0 &       0 &   n^{-1/6}\widehat  f ^{-1/4}(z)
    \end{pmatrix},
\end{equation}
and
\begin{align} \label{fqdefhatPminus}
  \widehat  f(z) =
    \left[\frac{3}{4}\widehat \psi(z)\right]^{2/3}
\end{align}
with $\widehat \psi$ defined in \eqref{def:mappingPsi1}, such that $\widehat f$ is a biholomorphic (conformal) map of a neighborhood of $p_-$ onto a
neighborhood of the origin such that $f(z)$ is real and positive
for $z>p_-$.

\subsubsection{Parametrix near the origin (hard edge) }

\begin{figure}[t]
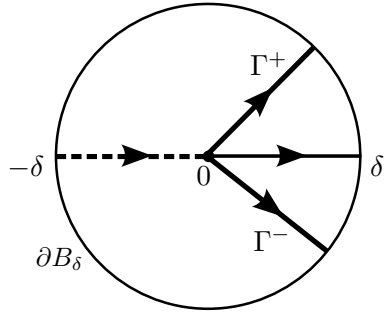

\centering \begin{overpic}[scale=1.2]%
{localanalysis0}%
      \put(45, 41){$0 $}
           \put(100, 44){$ \delta$}
                    \put(-15, 44){$   -\delta$}
       \put(62, 76){$\Gamma^+ $}
       \put(63,20){$\Gamma^- $}
       \put(-6,15){\small $\partial B_\delta $}
\end{overpic}
\caption{Local analysis at the origin.}
\label{fig:localanalysis0}
\end{figure}

Following the ideas of \cite{MR2470930} (see Section 8.2.1 therein), we consider a small fixed disk, $B_\delta$, of radius $0<\delta<  |p_-|/2  $, centered at the origin (see Figure~\ref{fig:localanalysis0}).
We look for $\bm P$   holomorphic in $ B_\delta \setminus (\R_+\cup \Gamma^\pm  )$, such that $ \bm P_+(z)=\bm P_-(z) \bm J_{\bm T}(z)$, where, as we have seen,
\begin{align*}
\bm J_{\bm T}(z)& = \begin{pmatrix}
 0  &  \frac{4}{1-\upsilon^{-4n}  } & 0  \\
- \frac{1-\upsilon^{-4n}  }{4} &  0  & 0\\
0& 0& 1
\end{pmatrix},	 \quad x\in B_\delta \cap (0,p_+),
\\
\bm J_{\bm T}(z)& = \begin{pmatrix}
1   & 0&0 \\
 \frac{1-\upsilon^{-4n}  }{4} e^{n \psi(z)}  & 1 & 0 \\
& 0& 1
\end{pmatrix} ,	 \quad z\in B_\delta \cap  \Gamma^\pm,
\end{align*}
 such that
\begin{equation} \label{localPat0}	
\bm P(z)=\mathcal O \left(\begin{array}{c|c|c}1 & |z|^{-1/2 } &  |z|^{-1/2 }   \end{array}\right)
  \quad z\to 0,
\end{equation}
and
$$
\bm P(z)=\bm N(z) \left( \bm I + \mathcal O(1/n) \right) \quad z\in \partial B_\delta \setminus (\R_+ \cup \Gamma^\pm).
$$

Observe that at this stage we have disregarded the jump of $\bm T$ on $(-\delta, 0)$, given by
$$
 \begin{pmatrix}
1 &  &   \\
&1 &   \\
 & 2 x_+^{1/2} e^{-n(g_{2+}+g_{2-}-g_1)}  & 1
\end{pmatrix},
$$
because, according to item $(ii)$ of Proposition~\ref{prop:summaryG}, the off-diagonal term converges to $0$ uniformly in $n$.

Let
$$
 \widetilde{\bm P}(z) = \bm P(z) \diag \left(  \frac{2}{\sqrt{1-\upsilon^{-4n} }  } e^{-n \psi(z)/2}, \frac{\sqrt{1-\upsilon^{-4n} }  } {2} e^{n \psi(z)/2}, 1 \right), \quad z \in B_\delta \setminus \R,
$$
with the square root (well defined for $n$ large enough) positive on $\R_+$. Using \eqref{bdryvalues} and \eqref{psiOnNegative} we conclude that
 $ \widetilde{\bm P}$  is also holomorphic in $ B_\delta \setminus (\R_+\cup \Gamma^\pm  )$, with $  \widetilde{\bm P}_+(z)= \widetilde{\bm P}_-(z) \bm J_{ \widetilde{\bm P}}(z)$, where
\begin{align*}
\bm J_{ \widetilde{\bm P}}(z)& = \begin{pmatrix}
 0  &  1 & 0  \\
- 1 &  0  & 0\\
0& 0& 1
\end{pmatrix},	 \quad x\in B_\delta \cap (0,p_+),
\\
\bm J_{ \widetilde{\bm P}}(z)& = \begin{pmatrix}
1   & 0&0 \\
 1 & 1 & 0 \\
& 0& 1
\end{pmatrix} ,	 \quad z\in B_\delta \cap  \Gamma^\pm.
\end{align*}
Also, the local behavior of $\widetilde{\bm P}$ at the origin matches that of $\bm P$  (see \eqref{localPat0}).

Parametrix $\bm P$ will be built in terms of the modified Bessel functions of order $0 $
see \cite[Section 8.2.1]{MR2470930}.
Namely, with the modified Bessel functions $I_{0}$ and $K_{0}$, and the
Hankel functions $H_{0}^{(1)}$ and $H_{0}^{(2)}$ (see \cite[Chapter 9]{abramowitz/stegun:1972}), we define a
$2 \times 2 $ matrix $\bm L(\zeta)$ for $|\arg \zeta| < 2 \pi/3$ as
\begin{equation}\label{RHPPSIsolution1}
    \bm L(\zeta) =
    \begin{pmatrix}
        I_{0 } (2 \zeta^{1/2}) & \frac{i}{\pi} K_{0 }(2 \zeta^{1/2}) \\[1ex]
        2\pi i \zeta^{1/2} I_{0 }'(2\zeta^{1/2}) & -2 \zeta^{1/2} K_{0 }'(2\zeta^{1/2})
    \end{pmatrix}.
\end{equation}
For $2\pi/3 < \arg \zeta < \pi$ we define it as
\begin{equation}\label{RHPPSIsolution2}
    \bm L(\zeta) =
    \begin{pmatrix}
        \frac{1}{2} H_{0 }^{(1)}(2(-\zeta)^{1/2}) &
        \frac{1}{2} H_{0 }^{(2)}(2(-\zeta)^{1/2}) \\[1ex]
        \pi \zeta^{1/2} \left(H_{0 }^{(1)}\right)'(2(-\zeta)^{1/2}) &
        \pi \zeta^{1/2} \left(H_{0 }^{(2)}\right)'(2(-\zeta)^{1/2})
    \end{pmatrix}  .
\end{equation}
And finally for $- \pi < \arg \zeta  < -2\pi/3$ it is defined as
\begin{equation}\label{RHPPSIsolution3}
    \bm L(\zeta) =
    \begin{pmatrix}
        \frac{1}{2} H_{0 }^{(2)}(2(-\zeta)^{1/2}) &
        -\frac{1}{2} H_{0 }^{(1)}(2 (-\zeta)^{1/2}) \\[1ex]
        -\pi \zeta^{1/2} \left(H_{0 }^{(2)}\right)'(2 (- \zeta)^{1/2}) &
        \pi \zeta^{1/2} \left(H_{0 }^{(1)}\right)'(2 (-\zeta)^{1/2})
    \end{pmatrix}.
\end{equation}
With this definition we take
$$
\bm \Psi(s) = \left(\begin{MAT}{c.c}
\sigma_3  \bm L (-s) \sigma_3 & \bm 0 \\.
 \bm  0 & 1 \\
\end{MAT} \right) , \quad \sigma_3= \begin{pmatrix}
1 & 0   \\
0 &   -1
\end{pmatrix}.
$$
As in \cite{MR2470930}, we conclude that
  \begin{equation} \label{parametrixPhatat0}	
 \bm P(z) = \bm E(z)
 \bm    \Psi\left(n^2   f (z)\right)  \diag \left(  \frac{\sqrt{1-\upsilon^{-4n} }  } {2} e^{n \psi(z)/2}, \frac{2}{\sqrt{1-\upsilon^{-4n} }  } e^{-n \psi(z)/2},  1 \right),
\end{equation}
where
\begin{equation} \label{eq:defEhat}
  \bm   E(z) =   \bm N(z)\,    \diag \left(\frac{1}{\sqrt{2}} \begin{pmatrix} 1 & i \\ i & 1 \end{pmatrix}, 1 \right)
       \diag \left( (2\pi n)^{1/2} f(z)^{1/4},\; (2\pi n)^{-1/2} f(z)^{-1/4}, \; 1
    \right),
\end{equation}
and
\begin{align} \label{fqdefhat}
     f(z) =
    \left[\frac{3}{4} ( \psi(z)-  \psi(0))\right]^{2/3}.
\end{align}

\medskip

 \subsection{Final transformation}\label{section9}

Recall that we denote generically by $B_{\delta}$ the small disks around the branch points $0$  and $p_\pm$,
and by $\bm P$ the local parametrices built in $B_{\delta}$. We define the matrix valued function $\bm R$ as
\begin{equation} \label{Rdef}
\bm R(z) = \begin{cases}
    \bm T(z) \bm P^{-1}(z), & \text{in the  neighborhoods $B_{\delta}$,   }   \\
    \bm T(z) \bm N^{-1}(z), & \text{elsewhere.}
    \end{cases}
\end{equation}
Then $\bm R$ is defined and analytic outside the real line, the lips $\Delta^\pm $
and $\Gamma^\pm $ of the lenses and the circles around the three branch points.
The jump matrices of $\bm T$ and $\bm N$ coincide on $(-\infty, p_-)$ and $(0,p_+)$
and the jump matrices of $\bm T$ and $\bm P$ coincide inside the three disks with the
exception of the interval   $(-\delta, 0)$.
It follows that $\bm R$ has an analytic continuation to the complex plane
minus the
contours shown in Figure~\ref{fig:thirdcontour_bis}.

\begin{figure}[t]
\centering \begin{overpic}[scale=1]%
{thirdcontour_bis}%
      \put(90.5,6.5){$p_+ $}
       \put(20,18){$\Delta^+ $}
       \put(20,2){$\Delta^- $}
       \put(70,16.5){$\Gamma^+ $}
       \put(70,3){$\Gamma^- $}
       \put(31.5,6.5){$p_-$}
\put(59.7,10.3){\scriptsize $0$}
\end{overpic}
\caption{Contours for $\bm R$.}
\label{fig:thirdcontour_bis}
\end{figure}

We can follow the arguments in \cite[Section 9]{MR2470930} to conclude that
\begin{equation} \label{RnearI}
\bm R(z)= \bm I + \mathcal{O} \left( \frac{1}{n (|z|+1)}\right)\,,
    \qquad n \to \infty\,,
\end{equation}
uniformly for $z$ in the complex plane outside of these contours.

\section{Asymptotics }

Now we unravel all the transformations in order to get the asymptotic results from Theorems~\ref{asymptoticsFinal}
and \ref{asymptoticsKernel}.

Assume first that $z$ lies outside  the small disks $B_{\delta}$ around the branch points $0$  and $p_\pm$, so that
$$
\bm T(z) = \bm R(z) \bm N (z) = \left( \bm I + \mathcal{O} \left( \frac{1}{n (|z|+1)}\right) \right)\bm N (z).
$$
Assume further that $z$ lies in one of  the unbounded component of the complement to the curves depicted in Figure~\ref{fig:thirdcontour_bis}. By \eqref{defX1}, \eqref{defUU} and \eqref{defT},
\begin{align*}
 \bm Y(z)  & = \diag\left( e^{-n  \omega },   1 ,  1 \right)  \left( \bm I + \mathcal{O} \left( \frac{1}{n (|z|+1)}\right) \right) \\
 &\times \bm N (z)  \diag\left( e^{n(g_1(z)+\omega)},   e^{-n(g_1(z)-g_2(z))} ,  e^{-n g_2(z)}\right) \left(\begin{MAT}{c.c}
  1 & \bm 0 \\.
 \bm  0 & \bm A_*^{-1}(z) \\
\end{MAT} \right),
\end{align*}
where $A_*$ stands either for $A_L$ or $A_R$.

Thus,
\begin{align*}
 \bm Y_{11}(z)  & =  \left( 1,   0, 0  \right)  \bm Y (z) \begin{pmatrix} 1 \\ 0 \\ 0
 \end{pmatrix} \\
& = \left( e^{-n  \omega },   0 ,  0 \right)  \left( \bm I + \mathcal{O} \left( \frac{1}{n (|z|+1)}\right) \right)
   \bm N (z)   \begin{pmatrix} e^{n(g_1(z)+\omega)} \\ 0 \\ 0
 \end{pmatrix} \\
& = e^{n g_1(z) } \left( 1,   0 ,  0 \right)  \left( \bm I + \mathcal{O} \left( \frac{1}{n (|z|+1)}\right) \right)
   \bm N (z)   \begin{pmatrix} 1 \\ 0 \\ 0
 \end{pmatrix} \\
 & = e^{n g_1(z) } \left( 1,   0 ,  0 \right)  \left( \bm I + \mathcal{O} \left( \frac{1}{n (|z|+1)}\right) \right)
   \bm N_{*1} (z)     \\
& =  e^{n g_1(z) } \left(\bm N_{11}(z)  +  \mathcal{O} \left( \frac{1}{n (|z|+1)}\right) \right).
\end{align*}
It remains to use Proposition~\ref{prop:global} to establish \eqref{outerForQ1}.

In the same fashion, if $z$ lies on the $+$-side of $(0,p_+)$, that is, in a domain of the form
$$
\Omega=\{ z\in \C:\, \Re z \in (\varepsilon, p_+-\varepsilon), \, \Im z \in [0, \varepsilon)\},
$$
where $\varepsilon>0$ is fixed, then
\begin{align*}
 \bm Y(z)  & = \diag\left( e^{-n  \omega },   1 ,  1 \right)  \left( \bm I + \mathcal{O} \left( \frac{1}{n }\right) \right) \bm N (z) \begin{pmatrix}
1   & 0&0 \\
 \frac{1-\upsilon^{-4n}  }{4} e^{n \psi(z)}  & 1 & 0 \\
0 & 0& 1
\end{pmatrix}  \\
 &\times  \diag\left( e^{n(g_1(z)+\omega)},   e^{-n(g_1(z)-g_2(z))} ,  e^{-n g_2(z)}\right) \left(\begin{MAT}{c.c}
  1 & \bm 0 \\.
 \bm  0 & \bm A_R^{-1}(z) \\
\end{MAT} \right),
\end{align*}
so that
\begin{align*}
 \bm Y_{11}(z)  = &   \left( 1,   0, 0  \right)  \bm Y (z) \begin{pmatrix} 1 \\ 0 \\ 0
 \end{pmatrix} \\
 =&  e^{n g_1(z) } \left( 1,   0 ,  0 \right)  \left( \bm I + \mathcal{O} \left( \frac{1}{n }\right) \right)
   \bm N (z)   \begin{pmatrix} 1 \\  \frac{1-\upsilon^{-4n}  }{4} e^{n \psi(z)}  \\ 0
 \end{pmatrix} \\
= & e^{n g_1(z) } \left( 1,   0 ,  0 \right)  \left( \bm I + \mathcal{O} \left( \frac{1}{n }\right) \right)
  \left( \bm N_{*1} (z) + \frac{1-\upsilon^{-4n}  }{4} e^{n \psi(z)} \bm N_{*2} (z) \right)     ,
\end{align*}
which proves \eqref{outerForQ2} with the aid of Proposition~\ref{prop:global}. Now formula \eqref{outerForQ3} follows from \eqref{bdryvalues}.


In the same vein, 
\begin{align*}
 \bm Y_+(x) \begin{pmatrix} 1 \\ 0 \\ 0
 \end{pmatrix}
 & =  e^{n \omega } \diag\left( e^{-n  \omega },   1 ,  1 \right)  \bm R_+(x) \bm N_+ (x) \begin{pmatrix}
e^{n g_{1+}(x)}     \\
 \frac{1-\upsilon^{-4n}(x)  }{4} e^{n g_{1-}(x))}  \\
0  
\end{pmatrix}  ,
\end{align*}
and
\begin{align*}
\begin{pmatrix}
0, w_{1,n}(y), w_{2,n}(y)
\end{pmatrix}
  \bm Y_+(y)^{-1}  & = \frac{4  e^{- n  \omega }}{1-\upsilon^{-4n}(y)}    \begin{pmatrix}
- \frac{1-\upsilon^{-4n} (y) }{4} e^{- n g_{1+}(y)} , e^{- ng_{1-}(y)} , 0
\end{pmatrix} 
  \\
 &\times \bm N_+^{-1} (y) \bm R_+^{-1}(y)  
  \diag\left( e^{n  \omega },   1 ,  1 \right)   ,
\end{align*}
where we have used the explicit expression for $\bm A_{ R}$, the boundary values \eqref{bdryvalues}, and the equilibrium conditions $(i)$ from Proposition~\ref{prop:summaryG}. In consequence, by formula~\eqref{charactKernel},
\begin{align*}
K_n(x,y) & = \frac{1}{2\pi i (x-y)}  \frac{4}{1-\upsilon^{-4n}(y)}  \begin{pmatrix}
- \frac{1-\upsilon^{-4n} (y) }{4} e^{- n g_{1+}(y)} , e^{- ng_{1-}(y)} , 0
\end{pmatrix} 
  \\
 &\times \bm N_+^{-1} (y) \bm R_+^{-1}(y)  
 \bm R_+(x) \bm N_+ (x) \begin{pmatrix}
e^{n g_{1+}(x)}     \\
 \frac{1-\upsilon^{-4n}(x)  }{4} e^{n g_{1-}(x))}  \\
0  
\end{pmatrix}.
\end{align*}
We have
\begin{align*}
\bm N_+^{-1} (y) \bm R_+^{-1}(y)  
 \bm R_+(x) \bm N_+ (x) & = \bm N_+^{-1} (y) \left(\bm I +\mathcal O \left( \frac{x-y}{n}\right) \right) \bm N_+ (x)
 \\
 & = \bm I +\mathcal O \left(  x-y\right)\quad  \text{as} \quad y\to x.
\end{align*}
Thus,
\begin{align*}
K_n(x,y) & = \frac{1}{2\pi i (x-y)}  \frac{4}{1-\upsilon^{-4n}(y)}  \begin{pmatrix}
- \frac{1-\upsilon^{-4n} (y) }{4} e^{- n g_{1+}(y)} , e^{- ng_{1-}(y)} , 0
\end{pmatrix} 
  \\
 &\times \left(    \bm I +\mathcal O \left(  x-y\right) \right) \begin{pmatrix}
e^{n g_{1+}(x)}     \\
 \frac{1-\upsilon^{-4n}(x)  }{4} e^{n g_{1-}(x))}  \\
0  
\end{pmatrix}\\
&= \frac{1}{2\pi i (x-y)}  \left( -e^{- n (g_{1+}(x)-g_{1+}(y))} + \frac{1-\upsilon^{-4n}(x)}{1-\upsilon^{-4n}(y)} e^{- n (g_{1-}(x)-g_{1-}(y))} +\mathcal O(x-y)\right)    \\
&= \frac{1}{2\pi i (x-y)}  \left( -e^{- n (g_{1+}(x)-g_{1+}(y))} +   e^{- n (g_{1-}(x)-g_{1-}(y))}\right) +\mathcal O(1), \quad y\to x.
\end{align*}
Using \eqref{onsupp1} we conclude that
$$
K_n(x,x)=n \lambda_1'(x) + \mathcal O(1), \quad n \to \infty.
$$

On the other hand, if we take
$$
x_n=x^*+\frac{x}{n \lambda_1'(x^*)}, \quad y_n=x^*+\frac{y}{n \lambda_1'(x^*)},
$$
we get
\begin{align*}
K_n(x_n,y_n) &  = \frac{n \lambda_1'(x^*)}{\pi (x-y)}  \left( -e^{- n (g_{1+}(x_n)-g_{1+}(y_n))} +   e^{- n (g_{1-}(x_n)-g_{1-}(y_n))} +\mathcal O\left(\frac{1}{n}\right)\right)\\ &  = \frac{n \lambda_1'(x^*)}{\pi (x-y)}  \left( e^{\pi i (x-y) } - e^{-\pi i (x-y) } +\mathcal O\left(\frac{1}{n}\right)\right).
\end{align*}
This concludes the proof of Theorem~\ref{asymptoticsKernel}.

\section*{Acknowledgements}

The first author (AIA) received support from RFBR grant 13-01-12430 (OFIm) and the Excellence Chair Program sponsored by Universidad Carlos III de Madrid and the Bank of Santander. The second (GLL) and the third (AMF) authors were supported by MICINN of Spain under grants MTM2012-36732-C03-01 and MTM2011-28952-C02-01, respectively, and by the European Regional Development Fund (ERDF). Additionally, AMF was supported by Junta de Andalucía (the Excellence Grant P11-FQM-7276 and the research group FQM-229) and by Campus de Excelencia Internacional del Mar (CEIMAR) of the University of Almería. 

This work was completed during a visit of AMF to the Department of Mathematics of the Vanderbilt University. He acknowledges the hospitality of the hosting department, as well as a partial support of the University of Almer\'{i}a through the travel grant EST2014/046.


\def\cprime{$'$}

\obeylines
\texttt{
A. I. Aptekarev (aptekaa@keldysh.ru)
Keldysh Institute of Applied Mathematics, Moscow, RUSSIA
\medskip
G.~L\'opez Lagomasino (lago@math.uc3m.es)
Department of Mathematics, Universidad Carlos III de Madrid, Legan\'es, SPAIN
\medskip
A. Mart\'{\i}nez-Finkelshtein (andrei@ual.es)
Department of Mathematics
University of Almer\'{\i}a, SPAIN, and
Instituto Carlos I de F\'{\i}sica Te\'{o}rica y Computacional
Granada University, SPAIN
}

\end{document}